\newtheorem{thm}{Theorem}
\newtheorem{cor}[thm]{Corollary}
\newtheorem{lemma}[thm]{Lemma}
\theoremstyle{definition}
\newcommand{\E}{\mathbb{E}}
\newcommand{\N}{\mathbb{N}}
\newcommand{\n}{\mathcal{N}}
\newcommand{\C}{\mathbb{C}}
\renewcommand{\P}{\mathbb{P}}
\newcommand{\abs}[1]{\left\vert #1 \right\vert}
\DeclareMathOperator{\var}{Var}
\DeclarePairedDelimiter\ceil{\lceil}{\rceil}
\newcommand{\todo}[1]{\textsf{\color{blue} (#1)}}
\newcommand{\Unitary}[1]{\mathbb{U}\left(#1\right)}
\newcommand{\ind}[1]{\mathbbm{1}_{#1}}
\thanks{\footnotemark {$^\dagger$} Supported in part by NSF DMS 1612589.}
\author{Elizabeth Meckes{$^\dagger$}}
\author{Kathryn Stewart{$^\dagger$}}
\address{Department of Mathematics, Applied Mathematics, and
  Statistics, Case Western Reserve University, 10900 Euclid Ave.,
  Cleveland, Ohio 44106, U.S.A.}
\email{elizabeth.meckes@case.edu}
\address{Department of Mathematics, Applied Mathematics, and
  Statistics, Case Western Reserve University, 10900 Euclid Ave.,
  Cleveland, Ohio 44106, U.S.A.}
\email{kathrynstewart@case.edu}
\title{Eigenvalue rigidity for truncations of random unitary matrices}
\begin{document}

\maketitle

\begin{abstract}
We consider the empirical eigenvalue distribution of an $m\times m$
principal submatrix of an $n\times n$ random unitary matrix
distributed according to Haar measure.  For $n$ and $m$ large with $\frac{m}{n}=\alpha$, the empirical spectral measure is well-approximated by a deterministic measure $\mu_\alpha$ supported on the unit disc.  In earlier work, we 
showed that for fixed $n$ and $m$, the bounded-Lipschitz distance
between the empirical spectral measure and the corresponding $\mu_\alpha$ is typically of order $\sqrt{\frac{\log(m)}{m}}$ or
smaller.  In this paper, we consider eigenvalues on a microscopic
scale, proving concentration inequalities for the eigenvalue counting
function and for individual bulk eigenvalues.

\end{abstract}

\section{Introduction}

Let $U$ be an $n \times n$ Haar-distributed unitary matrix and let
$U_m$ be the $m \times m$ top-left block of $U$, where $m < n$. We
refer to $U_m$ as a truncation of $U$. The eigenvalues of the
truncation are all located within the unit disc and the asymptotic
distribution of the eigenvalues can be described quite explicitly. Let  $\mu_{m}$ denote the empirical spectral measure of $U_m$, that is,
\begin{align*}
\mu_{m} = \frac{1}{m} \sum_{p=1}^m \delta_{\lambda_p},
\end{align*}
where $\lambda_1, \ldots , \lambda_m$ are the eigenvalues of $U_m$. Petz and R\'effy \cite{PR} proved that if $\frac{m}{n}\to\alpha\in(0,1)$, then $\mu_m$ converges almost surely to a limiting spectral measure $\mu_{\alpha}$; it has radial density with respect to Lebesgue measure on $\mathbb{C}$ given by 
\begin{equation*}
f_{\alpha}(z)= \begin{cases} \frac{(1-\alpha)}{\pi \alpha (1-|z|^2)^2}, & 0 < |z| < \sqrt{\alpha}; \\
0, & \mbox{otherwise}. \end{cases}
\end{equation*}

In \cite{MS19}, we proved the following non-asymptotic, quantitative version of this
result.  The rescaling was chosen so that the support of the limiting measure is the full unit disc, independent of $\alpha$.
\begin{thm} [E.\ Meckes and K.\ Stewart]\label{T:spectral approx}
Let
$n,m\in\N$ with $1\le m<n$. Let
$U\in\Unitary{n}$ be distributed according to Haar measure, and let
$\lambda_1, \ldots,
\lambda_m$ denote the eigenvalues of the top-left $m\times m$ block of
$\sqrt{\frac{n}{m}}U$.  The joint law of $\lambda_1,\ldots,\lambda_m$ is denoted
$\P_{n,m}$. 
Let $\mu_m$ be the random measure with mass $\frac{1}{m}$ at each of
the $\lambda_p$, and let $\alpha=\frac{m}{n}$. Let $\mu_\alpha$ be the probability
measure on the unit disc with the density $g_\alpha$ defined by
\begin{equation*}
g_{\alpha}(z)= \begin{cases} \frac{(1-\alpha)}{\pi (1-\alpha|z|^2)^2}, & 0 < |z| < 1; \\
0, & \mbox{otherwise}. \end{cases}
\end{equation*}
For any $r>0$,
\begin{equation*}
\mathbb{P}_{n,m} \Big[ d_{BL} \left( \mu_m , \mu_{\alpha} \right) \geq r \Big]  \le e^2\exp\left\{-C_\alpha m^2r^2+2m\log(m)+C_\alpha'm\right\}+\frac{e}{2\pi}\sqrt{\frac{m}{1-\alpha}}e^{-m},\end{equation*}
where $C_\alpha=\frac{1}{128\pi(1+\sqrt{3+\log(\alpha^{-1})})^2}$ and $C_\alpha'=6+3\log(\alpha^{-1})$. \\
\end{thm}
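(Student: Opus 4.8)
\medskip
\noindent\textbf{Proof strategy.}
The plan is a Coulomb gas argument built on the exact eigenvalue density of a truncated Haar unitary. Recall the classical formula (\.Zyczkowski and Sommers): for $n-m\ge 1$ the eigenvalues of the unscaled $m\times m$ corner have joint density on $\{|z|<1\}^m$ proportional to $\prod_{i<j}|z_i-z_j|^2\prod_i(1-|z_i|^2)^{n-m-1}$. Rescaling by $\sqrt{n/m}$, the law $\P_{n,m}$ of $\lambda_1,\dots,\lambda_m$ has density $\widetilde Z_{n,m}^{-1}e^{-\mathcal H_n}$ on the polydisc of radius $\sqrt{n/m}$, with
\begin{equation*}
\mathcal H_n(\lambda)=\sum_{i\ne j}\log\frac{1}{|\lambda_i-\lambda_j|}+\sum_{i=1}^m V_n(\lambda_i),\qquad V_n(z)=-(n-m-1)\log\bigl(1-\tfrac{m}{n}|z|^2\bigr).
\end{equation*}
Since the ensemble is determinantal, $\widetilde Z_{n,m}$ and the one-point correlation function $\rho^{(1)}_{n,m}$ are explicit products of ratios of Gamma functions; I will use these to extract effective constants. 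With $\alpha=m/n$, the governing continuum functional is
\begin{equation*}
\mathcal E_\alpha(\mu)=\iint\log\frac{1}{|x-y|}\,d\mu(x)\,d\mu(y)+\int V_\alpha\,d\mu,\qquad V_\alpha(z)=\frac{1-\alpha}{\alpha}\log\frac{1}{1-\alpha|z|^2},
\end{equation*}
noting $\tfrac1m V_n\to V_\alpha$. Since the equilibrium density of a potential $W$ equals $\tfrac{1}{4\pi}\Delta W$ on its support, a one-line computation gives $\tfrac{1}{4\pi}\Delta V_\alpha=g_\alpha$, so $\mu_\alpha$ is the equilibrium measure of $\mathcal E_\alpha$; one also checks that the effective potential $z\mapsto 2\int\log\frac1{|z-w|}\,d\mu_\alpha(w)+V_\alpha(z)$ is constant on the droplet $\{|z|\le 1\}$ and strictly larger on $1<|z|<\alpha^{-1/2}$, so eigenvalues leaving the droplet are energetically penalized.

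The crux is a lower bound for $\mathcal H_n$ on the event $\{d_{BL}(\mu_m,\mu_\alpha)\ge r\}$. Introduce a bad event $\mathcal B$ --- some $|\lambda_i|$ exceeding a threshold chosen strictly between $1$ and $\alpha^{-1/2}$ --- so that on $\mathcal B^c$ every eigenvalue stays uniformly away from the hard edge. On $\mathcal B^c$ I would regularize: put $\mu_m^\eta=\mu_m*\sigma_\eta$ with $\sigma_\eta$ the normalized indicator of a disc of radius $\eta\asymp 1/m$. Superharmonicity of $\log\frac1{|\cdot|}$ controls the off-diagonal interaction of the smeared atoms; their self-energy at scale $1/m$ contributes order $m\log m$; and smoothness of $V_\alpha$ away from the hard edge controls the potential term (including the $O(m)$ discrepancy between $n-m-1$ and $n-m$). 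This yields $m^2\mathcal E_\alpha(\mu_m^\eta)\le \mathcal H_n(\lambda)+m\log m+C^{(1)}_\alpha m$. On the other hand, since $\mu_\alpha$ is the equilibrium measure, a Coulomb transport inequality of Chafa\"i--Hardy--Ma\"ida type gives
\begin{equation*}
d_{BL}(\mu,\mu_\alpha)^2\le W_1(\mu,\mu_\alpha)^2\le C_\alpha^{-1}\bigl(\mathcal E_\alpha(\mu)-\mathcal E_\alpha(\mu_\alpha)\bigr)
\end{equation*}
for every probability measure $\mu$ on $\{|z|\le\alpha^{-1/2}\}$, the constant depending on the ambient disc only through its diameter $2\alpha^{-1/2}$ and on the explicitly bounded density $g_\alpha$; tracking these dependencies is precisely what produces $C_\alpha=\bigl(128\pi(1+\sqrt{3+\log(\alpha^{-1})})^2\bigr)^{-1}$. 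Since $d_{BL}(\mu_m,\mu_m^\eta)\le\eta$, combining the displays shows that on $\mathcal B^c\cap\{d_{BL}(\mu_m,\mu_\alpha)\ge r\}$,
\begin{equation*}
\mathcal H_n(\lambda)\ge m^2\mathcal E_\alpha(\mu_\alpha)+C_\alpha m^2(r-\eta)^2-m\log m-C^{(1)}_\alpha m.
\end{equation*}

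It remains to assemble the pieces. Writing
\begin{equation*}
\P_{n,m}\bigl[d_{BL}(\mu_m,\mu_\alpha)\ge r\bigr]\le \P_{n,m}[\mathcal B]+\frac{1}{\widetilde Z_{n,m}}\int_{\mathcal B^c\cap\{d_{BL}\ge r\}}e^{-\mathcal H_n(\lambda)}\,d\lambda\le \P_{n,m}[\mathcal B]+\frac{\vol(\text{polydisc})}{\widetilde Z_{n,m}}\,e^{-\inf\mathcal H_n},
\end{equation*}
with the infimum over the far, good region, and inserting the lower bound just obtained together with the exact product formula for $\widetilde Z_{n,m}$ and its Stirling expansion $\log\widetilde Z_{n,m}=-m^2\mathcal E_\alpha(\mu_\alpha)+O(m\log m)$ and $\log\vol(\text{polydisc})=O(m+m\log(\alpha^{-1}))$, the $m^2$-order terms cancel; after replacing $(r-\eta)^2$ by $r^2-O(m^{-1})$ and collecting the subleading terms (which combine to at most $2m\log m+C_\alpha' m$, the $\log(\alpha^{-1})$ coming from $\log\vol$), the second term is at most $e^2\exp\bigl(-C_\alpha m^2r^2+2m\log m+C_\alpha' m\bigr)$ with $C_\alpha'=6+3\log(\alpha^{-1})$. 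Finally $\P_{n,m}[\mathcal B]$ is at most the expected number of eigenvalues beyond the threshold, $\int\rho^{(1)}_{n,m}$ over the corresponding annulus; inserting the closed form of $\rho^{(1)}_{n,m}$ and estimating the resulting radial integral by Laplace's method yields the residual term $\frac{e}{2\pi}\sqrt{\frac{m}{1-\alpha}}\,e^{-m}$.

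The step I expect to be the main obstacle is making the energy comparison quantitative in both directions it is used. First, the transport inequality with an explicit, $\alpha$-dependent constant: for the two-dimensional logarithmic kernel the passage from the logarithmic energy of the signed measure $\mu-\mu_\alpha$ to $W_1(\mu,\mu_\alpha)$ is delicate --- it forces a scale-dependent mollification of $W_1$, which is the real source of the $\log(\alpha^{-1})$ inside $C_\alpha$ --- and one must exploit that $\mu_\alpha$ has an explicitly bounded density together with strict positivity of the effective-potential gap off the droplet. Second, the regularization error between $\frac1{m^2}\mathcal H_n$ and $\mathcal E_\alpha(\mu_m^\eta)$, combined with the subleading terms in the exact evaluation of $\widetilde Z_{n,m}$, must be held at order $m\log m$ even when two eigenvalues nearly collide (where the interaction is unbounded) or drift toward the hard edge; balancing $\eta\asymp 1/m$ against the threshold defining $\mathcal B$, and keeping every constant effective, is where most of the work lies.
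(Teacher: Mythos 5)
This theorem is quoted from the authors' earlier work \cite{MS19}; the present paper does not prove it, so there is no in-document proof to compare against. What I can say is that your proposal takes a genuinely different route from the one used in \cite{MS19}, and as written it has gaps that would prevent it from establishing the stated bound, in particular with the claimed constants.

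\medskip
\textbf{The approach differs in kind.} The \cite{MS19} argument (and the framework used throughout the present paper) proceeds from the determinantal structure of the eigenvalue point process: the exact kernel \eqref{E:kernel-formula}, the Hough--Krishnapur--Peres--Vir\'ag representation of counting functions as sums of independent Bernoullis, Bernstein-type concentration for counting statistics and linear statistics, and a covering-number argument over a net of Lipschitz test functions to pass to $d_{BL}$. The shape of the bound --- an $e^{2}$ prefactor, a slack term of exactly $2m\log m$, the diameter-dependent factor $\sqrt{3+\log\alpha^{-1}}$ inside $C_\alpha$, the $m\log(\alpha^{-1})$-type term in $C_\alpha'$ --- are all fingerprints of a net/union-bound computation. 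Your proposal is instead a Coulomb-gas / energy argument in the style of Chafa\"{\i}--Hardy--Ma\"{\i}da: regularize $\mu_m$, compare $m^{-2}\mathcal H_n$ with $\mathcal E_\alpha$, apply a Coulomb transport inequality, and balance against Stirling asymptotics for $\widetilde Z_{n,m}$. That is a legitimate and well-studied route to results of this general form, and your preliminary calculations (the identification $\tfrac1{4\pi}\Delta V_\alpha=g_\alpha$, the shape of the governing energy, the roles of the bad event and the regularization scale $\eta\asymp 1/m$) are all sound. But it is not the paper's route.

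\medskip
\textbf{Where the proposal falls short as a proof.} First, you need the Coulomb transport inequality with an explicit constant depending on $\alpha$, and you assert, without derivation, that tracking constants through the mollified-$W_1$ argument yields exactly $C_\alpha=\bigl(128\pi(1+\sqrt{3+\log\alpha^{-1}})^2\bigr)^{-1}$. There is no reason to expect a Coulomb transport bound to land on these particular numbers; indeed, the transport constant in that framework depends on $\|g_\alpha\|_\infty=\tfrac{1}{\pi(1-\alpha)}$, which blows up as $\alpha\to 1^-$, whereas the stated $C_\alpha$ has no $(1-\alpha)$ dependence at all (that dependence appears only in the additive remainder $\frac{e}{2\pi}\sqrt{\frac{m}{1-\alpha}}e^{-m}$). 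This is strong evidence that the exact constants are tied to the method of \cite{MS19} and will not be reproduced by an energy argument. Second, the statement that $\log\widetilde Z_{n,m}=-m^2\mathcal E_\alpha(\mu_\alpha)+O(m\log m)$ must be made non-asymptotic with explicit constants, since both the $m\log m$ coefficient and the $O(m)$ coefficient enter the final bound; you merely assert this. Third, the regularization error in passing from $\mathcal H_n(\lambda)$ to $m^2\mathcal E_\alpha(\mu_m^\eta)$ on $\mathcal B^c$ needs an explicit, worst-case control at order $m\log m + C m$ including the near-collision and near-edge configurations; you correctly flag this as the hard part, but you do not give an argument. So while a Coulomb-gas proof of a qualitative version of Theorem \ref{T:spectral approx} (a bound of the same \emph{shape}, with unspecified $\alpha$-dependent constants) is plausible along the lines you sketch, the proposal as written does not prove the theorem as stated, and in particular will not recover the specific constants.
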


The result above is essentially macroscopic; it says that with high
probability, $d_{BL}(\mu_m,\mu_\alpha)$ is of order
$\sqrt{\frac{\log(m)}{m}}$. The purpose of this paper is to examine
the microscopic level, by considering the eigenvalue counting function on small sets. Throughout the paper, we assume that $\alpha=\frac{m}{n}$ is bounded away from 0 and 1; i.e., that there is a fixed $\delta>0$ such that $\alpha\in(\delta,1-\delta)$.  Throughout the statements and proofs, there are constants $C_\alpha$ depending only on $\alpha$; their exact values may vary from one line to the next.

\medskip

We begin by ordering the
eigenvalues $\left\{ \lambda_p \right\}_{p=1}^m$ in the spiral fashion
introduced in \cite{MM15}. Define a linear order $\prec$ on $\mathbb{C}$ by making 0 initial, and for nonzero $w,z \in \mathbb{C},$ declare $w \prec z$ if either of the following hold:
\begin{itemize}
    \item $|w| < |z|$ 
    \item $ |w| =  |z|$ and $\arg w < \arg z$
\end{itemize}
We divide the disc of radius $\sqrt{\frac{m}{n}}$ (i.e., the support of the limiting eigenvalue density) into annuli with radii $r_i=\frac{i}{\sqrt{n-m+i^2}}$; it is verified below that the expected number of eigenvalues in the annulus from radius $r_{i-1}$ to $r_i$ is approximately $2i-1$.

More generally, for $\theta\in(0,2\pi]$, define
\begin{align*}
    A_{i,\theta} & = \left\{ z \in \mathbb{C} \bigg| z \prec r_ie^{i\theta} \right\} \\
    & = \left\{ z \in \mathbb{C} \bigg| |z| < r_i \right\} \cup \left\{ z \in \mathbb{C} \bigg| r_i \leq |z| < r_{i+1}, 0 < \arg z < \theta \right\},
\end{align*}
with $r_i = \frac{i}{\sqrt{n-m+i^2}}$ and
$1 \leq i \leq \sqrt{m}$ (see Figure \ref{F:segment}).

\begin{figure} 
  \begin{centering}
    \includegraphics[width=3in]{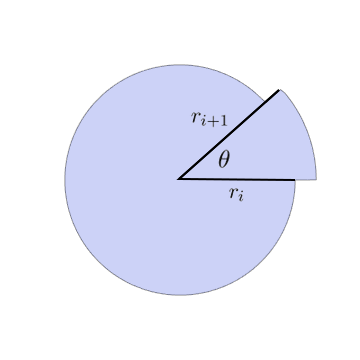}
  \end{centering}
  \caption{}
  \label{F:segment}
\end{figure}

Our first main result is on the concentration of the eigenvalue counting function for the sets $A_{i,\theta}$.
\begin{thm} \label{L:bernstein} Let $\mathcal{N}_{i,\theta}$ denote the number of eigenvalues of an $m\times m$ truncation of a Haar-distributed matrix in $\Unitary{n}$ which lie in $A_{i,\theta}$.  
If $\epsilon_m=\sqrt{\frac{2\log (m+1)}{m}}$, then for each $1 \leq i \leq \sqrt{m} \left( 1 - \frac{ \epsilon_m}{1 - \alpha\left(1-\epsilon_m \right)}\right)^{\frac{1}{2}}$, $0 \leq \theta \leq 2\pi$, and $t>0$, 
\begin{align*}
    \mathbb{P} &\left[ \left|\mathcal{N}_{i,\theta} - i^2 - \frac{\theta}{2\pi}(2i+1) \right|\geq t \right]  \leq 2e^{2}\exp\left[- \min \left\{ \tfrac{t^2}{C_\alpha i\sqrt{\log(i)}}, \tfrac{t}{4} \right\} \right].
\end{align*}
If  $t> \frac{12}{1-\alpha}\sqrt{2m \log (m+1)},$ then this estimate
is also valid for those $i$ with $ \sqrt{m} \left( 1 - \frac{ \epsilon_m}{1 - \alpha\left(1-\epsilon_m \right)}\right)^{\frac{1}{2}}\le i\le\sqrt{m}$.

\end{thm}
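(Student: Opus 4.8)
The plan is to write $\mathcal{N}_{i,\theta}$ as a sum of independent Bernoulli variables, apply Bernstein's inequality, and then locate its mean via an exact identity. The eigenvalues of the (unrescaled) $m\times m$ truncation form a determinantal point process on the unit disc whose kernel $K$ is the orthogonal projection onto $\{1,z,\dots,z^{m-1}\}$ inside $L^2\big(\mathbb D,(1-|z|^2)^{n-m-1}\tfrac{dA}{\pi}\big)$, as recalled in \cite{MS19}. By the classical fact that for a determinantal process the number of points $\#_D$ in a Borel set $D$ has the law of $\sum_k\xi_k$ with the $\xi_k$ independent Bernoullis whose means are the eigenvalues of the compression $\mathbf 1_DK\mathbf 1_D$ (so $0\le\E\xi_k\le1$ and $\sum_k\E\xi_k=\E[\#_D]$), taking $D=A_{i,\theta}$ and $v_{i,\theta}=\var(\mathcal N_{i,\theta})$, Bernstein's inequality gives
\[
\P\big[\,|\mathcal N_{i,\theta}-\E\mathcal N_{i,\theta}|\ge s\,\big]\le 2\exp\!\Big(\!-\tfrac{s^2/2}{v_{i,\theta}+s/3}\Big)\le 2\exp\!\Big(\!-\min\big\{\tfrac{s^2}{4v_{i,\theta}},\tfrac{3s}{4}\big\}\Big).
\]
Everything then reduces to bounding $v_{i,\theta}$ and to locating $\E\mathcal N_{i,\theta}$ near $i^2+\tfrac{\theta}{2\pi}(2i+1)$.

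For the variance I would split $A_{i,\theta}=A_{i,0}\sqcup B_{i,\theta}$, where $B_{i,\theta}=\{\,r_i\le|z|<r_{i+1},\ 0<\arg z<\theta\,\}$, so that $v_{i,\theta}\le2\var(\mathcal N_{i,0})+2\var(\#_{B_{i,\theta}})$. The angular term is handled crudely: $\#_{B_{i,\theta}}$ is again a determinantal counting function, so $\var(\#_{B_{i,\theta}})\le\E[\#_{B_{i,\theta}}]=\tfrac{\theta}{2\pi}\E[\#\{r_i\le|z|<r_{i+1}\}]$ by rotational invariance of $K$, and this expectation will turn out to be $O_\alpha(i)$. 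For the radial term, rotational symmetry diagonalizes $\mathbf 1_{A_{i,0}}K\mathbf 1_{A_{i,0}}$ in the monomial basis, and its eigenvalues are exactly $p_k=\P[Y_k<r_i^2]$, $1\le k\le m$, with $Y_k\sim\mathrm{Beta}(k,n-m)$ (the classical description of the squared moduli of a truncated unitary as independent beta variables). Writing $Y_k=\Gamma_k/(\Gamma_k+\Gamma_{n-m}')$ with independent gammas and using $r_i^2=i^2/(n-m+i^2)$, the event $\{Y_k\ge r_i^2\}$ becomes $\{\,\Gamma_k-\tfrac{i^2}{n-m}\Gamma_{n-m}'\ge0\,\}$ --- the event that a sub-exponential variable with mean $k-i^2$, variance $\asymp_\alpha i^2$ (using $i^2\le m$ throughout) and sub-exponential scale $O_\alpha(1)$ is nonnegative --- so a Bernstein bound yields $\min(p_k,1-p_k)\le\exp(-c_\alpha(k-i^2)^2/i^2)$ for all relevant $k$ (with faster decay far from $k=i^2$), whence $\var(\mathcal N_{i,0})=\sum_kp_k(1-p_k)\le\sum_k\min(p_k,1-p_k)\le C_\alpha i$. (This is in fact stronger than the $i\sqrt{\log i}$ implicit in the statement, and a fortiori more than enough.)

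To locate the mean I would use $\P[Y_k\le x]=I_x(k,n-m)$, the regularized incomplete beta function, together with the recursion $I_x(k,n-m)-I_x(k+1,n-m)=\binom{k+n-m-1}{n-m-1}x^k(1-x)^{n-m}$: summing over $k$ telescopes to $\E\mathcal N_{i,0}=\sum_{k=1}^mI_{r_i^2}(k,n-m)=\E[\min(W_i,m)]$, where $W_i$ is negative binomial with $\E W_i=i^2$ and $\var W_i=i^2\tfrac{n-m+i^2}{n-m}\le\tfrac{m}{1-\alpha}$ for $i\le\sqrt m$. Thus $\E\mathcal N_{i,0}=i^2-\E[(W_i-m)^+]$, and $\E[(W_i-m)^+]\le\tfrac12\sqrt{\var W_i}\le\tfrac12\sqrt{m/(1-\alpha)}$ always; moreover, in the range $i\le\sqrt m\big(1-\tfrac{\epsilon_m}{1-\alpha(1-\epsilon_m)}\big)^{1/2}$ a Chernoff estimate for $W_i$ (equivalently for $\P[\mathrm{Bin}(n-1,r_i^2)\ge m]$ --- the cutoff on $i$ is calibrated precisely for this) makes this quantity negligible, so $\E\mathcal N_{i,0}=i^2+o(1)$ there. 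Applying the same to $\mathcal N_{i+1,0}$ gives $\E\mathcal N_{i,\theta}=\E\mathcal N_{i,0}+\tfrac{\theta}{2\pi}(\E\mathcal N_{i+1,0}-\E\mathcal N_{i,0})=i^2+\tfrac{\theta}{2\pi}(2i+1)+o(1)$ in that range (and also bounds the annulus expectation above by $2i+1\le C_\alpha i$), while $|\E\mathcal N_{i,\theta}-i^2-\tfrac{\theta}{2\pi}(2i+1)|\le\sqrt{m/(1-\alpha)}$ in general.

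It remains to assemble. In the first range, $|\mathcal N_{i,\theta}-i^2-\tfrac{\theta}{2\pi}(2i+1)|\ge t$ forces $|\mathcal N_{i,\theta}-\E\mathcal N_{i,\theta}|\ge t-o(1)\ge t/2$ once $t$ exceeds an $\alpha$-dependent constant, and for smaller $t$ the asserted right-hand side already exceeds $1$ (this is where the prefactor $2e^2$ is spent), so the Bernstein bound with $v_{i,\theta}\le C_\alpha i$ gives the claim. For the remaining $i$ with $t>\tfrac{12}{1-\alpha}\sqrt{2m\log(m+1)}$, the mean discrepancy $\sqrt{m/(1-\alpha)}\le\tfrac{6}{1-\alpha}\sqrt{2m\log(m+1)}$ is at most $t/2$, so again $|\mathcal N_{i,\theta}-\E\mathcal N_{i,\theta}|\ge t/2$ and the same Bernstein bound (the variance estimate is unaffected, since $i^2\le m$ still) finishes; all constants are absorbed into $C_\alpha$. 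The main obstacle is the variance estimate --- pinning down the transition-region width of $k\mapsto\P[Y_k<r_i^2]$ with explicit $\alpha$-dependence via quantitative beta/gamma (or negative-binomial) tail bounds --- together with the careful bookkeeping of the mean through the incomplete-beta identity and the calibration of the cutoff on $i$ so that $\P[W_i\ge m]$ is negligible.
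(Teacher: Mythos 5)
Your proposal is correct and, at the level of overall architecture (HKPV plus Bernstein, then mean and variance estimates), mirrors the paper's strategy. But the two intermediate ingredients are proved by a genuinely different route, and the comparison is worth recording.

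For the mean, the paper expands $K(z,z)$ via the Pochhammer/hypergeometric identity to get $K(z,z)=mf_\alpha(z)\bigl[1-(1-|z|^2)^{n-m+1}\sum_{p\ge m}\tfrac{(n-m+1)_p}{p!}|z|^{2p}\bigr]$, interprets the correction as $\P[Y_{n-m+1}(|z|^2)\ge m]$ for a negative-binomial-type variable, and controls it via a moment-generating-function bound (sharp range) plus Markov (general range); the exact identity $m\mu_\alpha(A_{i,\theta})=i^2+\tfrac{\theta}{2\pi}(2i+1)$ then comes from the explicit density $f_\alpha$. You instead work on the spectral side: the compression eigenvalues are $\P[\mathrm{Beta}(k,n-m)<r_i^2]$, the tail-sum identity plus the incomplete-beta/negative-binomial duality rewrites $\E\mathcal N_{i,0}$ exactly as $\E[\min(W_i,m)]=i^2-\E[(W_i-m)^+]$, and the cutoff on $i$ is exactly the calibration that makes the Chernoff tail of $W_i$ kill the error. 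This is tidy and avoids the hypergeometric manipulation; the two calculations control the same negative-binomial tail, so the calibration constants come out the same.

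For the variance, the difference is more substantial. The paper expands $\var(\mathcal N_{i,\theta})=\int_A\int_{A^c}|K|^2$ directly (the $V_1,\dots,V_4$ decomposition), reduces to binomial tail probabilities $\P[Y_j>j]$, $\P[X_j\le j]$, and then bounds the resulting sums by splitting at $\pm i\sqrt{\log i}$ from the transition point and counting terms crudely in the transition window, which is where the $\sqrt{\log i}$ in the theorem statement comes from. You instead split $A_{i,\theta}=A_{i,0}\sqcup B_{i,\theta}$, bound the angular piece by $\var\le\E$ and rotational invariance, and diagonalize the radial compression, getting $\var(\mathcal N_{i,0})=\sum_kp_k(1-p_k)$ with $p_k=\P[\mathrm{Beta}(k,n-m)<r_i^2]$; the gamma representation turns each $p_k$ or $1-p_k$ into a one-sided tail of a centered sub-exponential variable with variance $\asymp_\alpha i^2$ and scale $O_\alpha(1)$, and summing the resulting Gaussian-type bound over $k$ gives $O_\alpha(i)$ without the logarithm. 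This is a genuine (modest) improvement over the paper's Lemma \ref{L: variance}; since $i\le i\sqrt{\log i}$ up to constants in the relevant range, it is a fortiori sufficient for the statement as written. The final assembly---absorbing the $O(1)$ (resp.\ $O(\sqrt{m\log m})$) mean discrepancy into $t/2$ for $t$ above an $\alpha$-dependent constant (resp.\ the stated threshold), with the $2e^2$ prefactor making the bound trivial for small $t$---matches the paper's argument.

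One small caution: your stated pointwise bound $\min(p_k,1-p_k)\le\exp(-c_\alpha(k-i^2)^2/i^2)$ should not be asserted uniformly in $k$; for $k$ well above $2i^2$ the variance of $\Gamma_k-\tfrac{i^2}{n-m}\Gamma_{n-m}'$ grows linearly in $k$ and the correct Bernstein bound is of linear (sub-exponential) type, not Gaussian in $(k-i^2)/i$. As you note parenthetically, the decay there is at least as fast, so the sum is still $O_\alpha(i)$, but the proof should explicitly switch between the Gaussian and exponential regimes rather than quote a single Gaussian bound for all $k$.
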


We next define predicted locations $\{ \tilde{\lambda}_p
\}_{p=1}^m$ for the eigenvalues by choosing $2i-1$ equally spaced points in the annulus with inner radius $r_{i-1}$ and outer radius $r_i$. 
The concentration inequalities in Theorem \ref{L:bernstein} for the counting function lead to the following concentration inequality for bulk eigenvalues about their predicted locations. 
\begin{thm} \label{L: individual}
Let $\{\lambda_p\}_{p=1}^m$ denote the eigenvalues of an $m\times m$ truncation of a Haar-distributed matrix in $\Unitary{n}$, ordered according to $\prec$.  Let $l=\ceil{\sqrt{p}}$.  There  are constants $c_\alpha,C_\alpha$ depending only on $\alpha=\frac{m}{n}$  such that, if $\epsilon_m=\sqrt{\frac{2\log (m+1)}{m}}$, then for those $p$ with 
\begin{align*}
2\le l\leq \sqrt{m}\left( 1 - \frac{ \epsilon_m}{1-\alpha ( 1- \epsilon_m)} \right)^{\frac{1}{2}}, 
\end{align*}
when $ s \leq 2\pi (l-1)$, 
\begin{align*}
    \mathbb{P}\left[ |\lambda_p - \tilde{\lambda}_p| \geq \tfrac{s}{\sqrt{n-m+(l-1)^2}} \right] \leq  2\exp\left[ - \frac{s^2}{C_\alpha l\sqrt{\log(l)}} \right];
\end{align*}
when $ 2\pi (l-1)< s \leq 2\sqrt{n-m+(l-1)^2}$,
\begin{align*}
    \mathbb{P}\left[ |\lambda_p - \tilde{\lambda}_p| \geq \tfrac{s}{\sqrt{n-m+(l-1)^2}} \right] \leq  2 \exp \left[ - c_\alpha s^2 \right];
\end{align*}
and when $s > 2\sqrt{n-m+(l-1)^2}$,
\begin{align*}
    \mathbb{P}\left[ |\lambda_p - \tilde{\lambda}_p| \geq \tfrac{s}{\sqrt{n-m+(l-1)^2}} \right] = 0.
\end{align*}
\end{thm}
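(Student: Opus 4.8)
The plan is to deduce the individual-eigenvalue estimate from the counting-function concentration in Theorem~\ref{L:bernstein} by the standard device of relating ``$\lambda_p$ is far from $\tilde\lambda_p$'' to ``the count $\mathcal N_{i,\theta}$ over an appropriate spiral segment deviates from its predicted value.'' First I would set up the deterministic correspondence between indices: with $l=\ceil{\sqrt p}$, the predicted point $\tilde\lambda_p$ lies in the annulus between radii $r_{l-1}$ and $r_l$, at one of the $2l-1$ equally spaced angles there, and by the construction of $\tilde\lambda_p$ the predicted count up to $\tilde\lambda_p$ in the $\prec$-order is exactly $p$ (using $(l-1)^2 + \frac{\theta}{2\pi}(2l-1) = p$ for the appropriate $\theta$, consistent with the predicted count $i^2+\frac{\theta}{2\pi}(2i+1)$ appearing in Theorem~\ref{L:bernstein} up to the harmless shift by one annulus). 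Then $|\lambda_p-\tilde\lambda_p|$ large forces $\lambda_p$ to lie $\prec$-before a point $w$ with $w\prec\tilde\lambda_p$ (or $\prec$-after a point $w'$ with $\tilde\lambda_p\prec w'$) that is at distance $\ge \frac{s}{\sqrt{n-m+(l-1)^2}}$ from $\tilde\lambda_p$; in the first case $\mathcal N_w\ge p$, i.e.\ the count up to $w$ exceeds its predicted value by at least the number of predicted points in the spiral segment from $w$ to $\tilde\lambda_p$, and symmetrically in the second. So the key geometric step is a lower bound: a spiral segment of the disc whose endpoints are at distance $\ge \rho$ from $\tilde\lambda_p$ contains at least $c_\alpha \rho\sqrt{n-m+(l-1)^2}$ predicted points (equivalently, the predicted points near radius $r_{l-1}$ have spacing $\asymp \frac{1}{\sqrt{n-m+(l-1)^2}}$, both radially—from $r_i-r_{i-1}\asymp \frac{1}{\sqrt{n-m+i^2}}$—and angularly—$\frac{2\pi r_i}{2i-1}\asymp\frac{1}{\sqrt{n-m+i^2}}$). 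This converts the displacement $\frac{s}{\sqrt{n-m+(l-1)^2}}$ into a deviation of the counting function of size $t\asymp c_\alpha s$.

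Next I would feed $t \asymp c_\alpha s$ into the tail bound of Theorem~\ref{L:bernstein}, applied to the relevant $A_{i,\theta}$ (a union over at most two values of $i$, namely $l-1$ and possibly $l$, to capture both the ``too early'' and ``too late'' events; a union bound costs only a factor of $2$, absorbed into constants). The min in the exponent splits into the regimes of the statement: when $t\le C_\alpha i\sqrt{\log i}$, i.e.\ roughly $s \le C_\alpha l\sqrt{\log l}$, actually better to track it as the Gaussian regime $\min = \frac{t^2}{C_\alpha i\sqrt{\log i}}$, giving $\exp[-s^2/(C_\alpha l\sqrt{\log l})]$; when $t$ is larger, $\min = t/4$ gives $\exp[-c_\alpha s]$—but in the stated range $2\pi(l-1)<s\le 2\sqrt{n-m+(l-1)^2}$ one wants $\exp[-c_\alpha s^2]$, so here I would instead use that $s\le 2\sqrt{n-m+(l-1)^2}$ together with $i=l-1\ge 1$ to bound $s^2 \le C_\alpha s \cdot i$ or similar, trading the $s^2$ bound for the linear-in-$t$ bound; the bookkeeping is to check the crossover points $s\asymp 2\pi(l-1)$ line up with $t\asymp C_\alpha i\sqrt{\log i}$. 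The constraint $2\le l\le\sqrt m(1-\eps_m/(1-\alpha(1-\eps_m)))^{1/2}$ is exactly the range in which the first (and stronger) estimate of Theorem~\ref{L:bernstein} applies without the extra largeness hypothesis on $t$, so no further restriction is needed.

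Finally, the last assertion—that the probability is $0$ once $s>2\sqrt{n-m+(l-1)^2}$—is purely deterministic: all eigenvalues of the truncation lie in the disc of radius $\sqrt{m/n}<1$, and $\tilde\lambda_p$ lies in the same disc, so $|\lambda_p-\tilde\lambda_p|\le 2\sqrt{m/n}$ always; it then suffices to check $2\sqrt{m/n}\le \frac{2\sqrt{n-m+(l-1)^2}}{\sqrt{n-m+(l-1)^2}}=2$... more precisely one checks $\frac{2\sqrt{n-m+(l-1)^2}}{\sqrt{n-m+(l-1)^2}}$ versus the diameter, i.e.\ that $s=2\sqrt{n-m+(l-1)^2}$ already corresponds to displacement $2\ge\diam$ of the support, after the rescaling; I would verify this bound precisely against the normalization $\sqrt{n/m}\,U$. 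I expect the \emph{main obstacle} to be the geometric counting step: making the lower bound ``displacement $\rho$ $\Rightarrow$ at least $c_\alpha\rho\sqrt{n-m+(l-1)^2}$ omitted predicted points'' uniform in the spiral order—in particular handling the wraparound in angle and the transition between adjacent annuli, and confirming that near the outer edge of the allowed range of $l$ the radial spacing $r_i-r_{i-1}$ and the annulus radii $r_i$ are still comparable to $\frac{1}{\sqrt{n-m+i^2}}$ up to $\alpha$-dependent constants—so that the two tail regimes of Theorem~\ref{L:bernstein} translate cleanly into the three regimes claimed here. Everything else is bookkeeping with the constants $C_\alpha,c_\alpha$, which we allow to change from line to line.
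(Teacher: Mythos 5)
Your overall strategy --- convert ``$\lambda_p$ is far from $\tilde\lambda_p$'' into a deviation of $\mathcal{N}_{i,\theta}$ from its predicted value, then invoke Theorem~\ref{L:bernstein} --- is exactly the paper's, and your case split (too early vs.\ too late, small vs.\ large~$s$), the role of Corollary~\ref{T:segment-means}, and the geometric conversion via the law-of-cosines type estimate \eqref{E: dist-in-circle} are all on target for the small-$s$ regime. But there is a real gap in the large-$s$ regime that your outline does not resolve.

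Your geometric step bounds the number of omitted predicted points only linearly: displacement $\rho$ gives $t\gtrsim\rho\sqrt{n-m+(l-1)^2}\asymp s$. Plugging $t\asymp s$ into Theorem~\ref{L:bernstein} yields, at best, $\exp[-c_\alpha s]$ when the linear term $t/4$ dominates the min, \emph{not} the claimed $\exp[-c_\alpha s^2]$. Your proposed repair --- ``use $s\le 2\sqrt{n-m+(l-1)^2}$ together with $i=l-1\ge 1$ to bound $s^2\le C_\alpha s\cdot i$'' --- does not hold: this would require $s\le C_\alpha(l-1)$, yet you are precisely in the regime $s>2\pi(l-1)$, and moreover $s$ can be as large as $2\sqrt{n-m+(l-1)^2}$, far beyond any constant multiple of $l$. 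What the paper does instead, in cases (II,C) and (II,D), is \emph{change the spiral segment}: when $s>2\pi(l-1)$ the displacement pushes $|\lambda_p|$ beyond roughly $r_{\lfloor s-l+1\rfloor}$, so one applies Theorem~\ref{L:bernstein} to $A_{\lfloor s-l+1\rfloor,2\pi}$, whose predicted count is $(\lfloor s-l+1\rfloor)^2+2\lfloor s-l+1\rfloor+1\ge p+cs^2$. The deviation $t$ is then $\asymp s^2$ (because the spiral order accumulates $\sim i^2$ points up to radius $r_i$, quadratically across annuli), and \emph{both} branches of the min in Bernstein now give $\exp[-c_\alpha s^2]$. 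Your linear spacing heuristic undercounts: the spacing $\asymp 1/\sqrt{n-m+i^2}$ shrinks as $i$ grows, so moving across many annuli omits far more than a linear number of predicted points.

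Two smaller points. First, your claim that the constraint $l\le\sqrt m(1-\epsilon_m/(1-\alpha(1-\epsilon_m)))^{1/2}$ means ``no further restriction is needed'' and the first estimate of Theorem~\ref{L:bernstein} always applies is incorrect: in case (D) the index of the set used, $\lfloor s-l+1\rfloor$, can exceed that threshold even when $l$ does not, and the paper explicitly invokes the second estimate of Theorem~\ref{L:bernstein} (with the largeness hypothesis on $t$, supplied by $t=cs^2\ge \tfrac{12}{1-\alpha}\sqrt{2m\log(m+1)}$ there). Second, your concluding deterministic observation is fine, but the threshold comes simply from the diameter of the unit disc being~$2$, since the eigenvalues after the rescaling by $\sqrt{n/m}$ and the predicted $\tilde\lambda_p$ both lie in $\{|z|\le 1\}$.
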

By way of example, if $2\pi(l-1)\le \sqrt{\frac{k}{c_\alpha}\log(n)}$, then 
\begin{align*}
\mathbb{P}\left[ |\lambda_p - \tilde{\lambda}_p| \geq \sqrt{\frac{k\log(n)}{c_\alpha(n-m+(l-1)^2)}}\right] \leq 2n^{-k},
\end{align*} 
whereas if, e.g., $\log(n)\le\frac{4\pi^2(l-1)^2}{kC_\alpha l\sqrt{\log(l)}}$, then
\begin{align*}
\mathbb{P}\left[ |\lambda_p - \tilde{\lambda}_p| \geq (\log(l))^{\frac{1}{4}}\sqrt{\frac{kC_\alpha l\log(n)}{n-m+(l-1)^2}}\right] \leq 2n^{-k},
\end{align*} 

For reference, spacing of predicted locations around
$\tilde{\lambda}_p$ is about $\frac{1}{\sqrt{n-m+(l-1)^2}}$.

The concentration inequalities of Theorem \ref{L: individual} also
easily imply the following variance bound for bulk eigenvalues.

\begin{cor}\label{T:location-variance}
Let $\epsilon_m = \sqrt{\frac{2 \log (m+1)}{m}}$ and $p$ be such that $2\le \lceil\sqrt{p}\rceil \leq \sqrt{m}\left( 1 - \frac{ \epsilon_m}{1-\alpha ( 1- \epsilon_m)} \right)^{\frac{1}{2}}. $

There is a constant $C_\alpha$ depending only on $\alpha=\frac{m}{n}$ such that
\[\var \left( \lambda_p \right) \leq C_\alpha \frac{\sqrt{p\log(p+1)}}{n}.\]
\end{cor}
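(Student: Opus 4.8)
The plan is to bound $\var(\lambda_p)$ by the second moment of $\lambda_p-\tilde\lambda_p$ and then integrate the tail estimates of Theorem \ref{L: individual}.

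First I would recall that for any $\mathbb{C}$-valued random variable $Z$ and any deterministic $a$, one has $\var(Z):=\E|Z-\E Z|^2\le \E|Z-a|^2$, since $\E|Z-a|^2=\E|Z-\E Z|^2+|\E Z-a|^2$. Applying this with $Z=\lambda_p$ and $a=\tilde\lambda_p$ reduces the corollary to an estimate on $\E|\lambda_p-\tilde\lambda_p|^2$. By the layer-cake formula, $\E|\lambda_p-\tilde\lambda_p|^2=\int_0^\infty 2t\,\P\bigl[|\lambda_p-\tilde\lambda_p|\ge t\bigr]\,dt$. Writing $N=n-m+(l-1)^2$ with $l=\ceil{\sqrt p}$ and substituting $t=s/\sqrt N$, the three ranges of $s$ in Theorem \ref{L: individual} correspond to $t\in[0,2\pi(l-1)/\sqrt N]$, $t\in(2\pi(l-1)/\sqrt N,\,2]$, and $t>2$, with the last range contributing nothing since the tail probability is exactly $0$ there.

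Next I would carry out the two remaining Gaussian-type integrals, extending each to $[0,\infty)$ for simplicity. On the first range the tail is at most $2\exp\bigl[-t^2N/(C_\alpha l\sqrt{\log l})\bigr]$, so this part of the integral is at most $\int_0^\infty 4t\,e^{-t^2N/(C_\alpha l\sqrt{\log l})}\,dt=2C_\alpha l\sqrt{\log l}/N$; on the second range the tail is at most $2e^{-c_\alpha t^2N}$, giving at most $\int_0^\infty 4t\,e^{-c_\alpha t^2N}\,dt=2/(c_\alpha N)$. Finally I would simplify using $\alpha\in(\delta,1-\delta)$, so that $N\ge n-m=(1-\alpha)n\ge\delta n$; the hypothesis forces $l\ge 2$, hence $p\ge 2$, and then $l=\ceil{\sqrt p}\le\sqrt p+1\le 2\sqrt p$ together with $\log l\le\log(2\sqrt p)\le\log(p+1)$ gives $l\sqrt{\log l}\le 2\sqrt{p\log(p+1)}$; moreover $\sqrt{p\log(p+1)}\ge 1$, so the term $1/N$ is itself bounded by a multiple of $\sqrt{p\log(p+1)}/n$. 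Collecting terms yields $\var(\lambda_p)\le C_\alpha\sqrt{p\log(p+1)}/n$ with $C_\alpha$ depending only on $\alpha$.

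There is no serious obstacle here: the argument is a routine second-moment / tail-integral computation. The only points requiring a little care are the bookkeeping of the $\alpha$-dependent constants and the elementary comparison $l\sqrt{\log l}\lesssim\sqrt{p\log(p+1)}$, which relies on the hypothesis $l\ge 2$ so that $\log l$ is bounded away from $0$.
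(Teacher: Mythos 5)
Your proposal is correct and follows essentially the same approach as the paper: bound $\var(\lambda_p)$ by $\E|\lambda_p-\tilde\lambda_p|^2$, expand via the layer-cake formula with the substitution $s=t\sqrt{n-m+(l-1)^2}$, integrate the three tail regimes from Theorem~\ref{L: individual} (the third contributing zero), and then convert $l\sqrt{\log l}$ to $\sqrt{p\log(p+1)}$ and $n-m+(l-1)^2$ to $n$ using $\alpha$ bounded away from $1$. You also correctly fill in a small detail the paper glosses over, namely that passing from $n-m+(l-1)^2$ to $n$ in the denominator uses $n-m+(l-1)^2\ge n-m=(1-\alpha)n$ rather than the stated (and unhelpful) observation $(l-1)^2\le m$.
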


\bigskip

\section{Means and Variances} \label{S: means}
Throughout the proofs, we will make heavy use of the fact that the
eigenvalues we consider are a
determinantal point process on $\{|z|\le 1\}$ with kernel (with respect to
Lebesgue measure) given by
\begin{equation}\label{E:kernel-formula}
    K(z_1, z_2)= \sum_{j=1}^m \frac{1}{N_j} (z_1\overline{z_2})^{j-1}(1-|z_1|^2)^{\frac{n-m-1}{2}}(1-|z_2|^2)^{\frac{n-m-1}{2}},
\end{equation} 
with 
\begin{equation*}
    N_j = \frac{\pi (j-1)!(n-m-1)!}{(n-m+j-1)!}.
\end{equation*}
See, e.g., \cite{ZS} or \cite{PR}.

Recall that for large $n$ and $\frac{m}{n} = \alpha \in (0,1)$ the spectral measure of the truncation is approximately given by the measure $\mu_{\alpha}$, with density  with respect to Lebesgue measure  given by
\begin{equation*}
f_{\alpha}(z)= \begin{cases} \frac{(1-\alpha)}{\pi \alpha (1-|z|^2)^2}, & 0 < |z| < \sqrt{\alpha}; \\
0, & \mbox{otherwise}. \end{cases}
\end{equation*}
In particular, given a set $A\subseteq\{|z|\le\sqrt{\alpha}\}$, the
expected number $\mathcal{N}_A$ of eigenvalues inside $A$ is approximately
\(m\mu_\alpha(A)\). We begin by giving explicit
estimates quantifying this approximation.

\begin{lemma} \label{L: expectedval} 
For any measurable $A\subseteq\{|z|\le\sqrt{\alpha}\}$,
\[m\mu_\alpha(A)-\frac{6\sqrt{2m\log(m+1)}}{1-\alpha}\le\E\mathcal{N}_A\le m\mu_\alpha.\]

If additionally 
  $A\subseteq\left\{|z|^2 \le \alpha \left(1-\sqrt{\frac{2\log(m+1)}{m}}\right)\right\},$
then 
\[m\mu_\alpha(A)-4\le\E \mathcal{N}_A\le m\mu_\alpha(A).\]

\end{lemma}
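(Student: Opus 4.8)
The plan is to exploit the determinantal structure: $\E\mathcal{N}_A=\int_A K(z,z)\,dA(z)$ with $K$ as in \eqref{E:kernel-formula} and $dA$ Lebesgue measure on $\C$. First I would put $K(z,z)$ in nearly closed form. Writing $\frac1{N_j}=\frac{n-m}{\pi}\binom{n-m+j-1}{j-1}$ and setting $k=j-1$ gives $K(z,z)=\frac{(n-m)(1-|z|^2)^{n-m-1}}{\pi}\sum_{k=0}^{m-1}\binom{n-m+k}{k}|z|^{2k}$, and the negative binomial identity $\sum_{k\ge0}\binom{n-m+k}{k}x^k=(1-x)^{-(n-m+1)}$ turns this into
\[
K(z,z)=mf_\alpha(z)-E(z),\qquad mf_\alpha(z)=\frac{n-m}{\pi(1-|z|^2)^2},\quad E(z)=\frac{(n-m)(1-|z|^2)^{n-m-1}}{\pi}\sum_{k\ge m}\binom{n-m+k}{k}|z|^{2k},
\]
where $E\ge0$ for $|z|<1$ and the relation $\frac{m(1-\alpha)}{\alpha}=n-m$ makes $\frac{n-m}{\pi(1-|z|^2)^2}$ exactly the density of $m\mu_\alpha$ on $\{|z|<\sqrt\alpha\}$. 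Since $A\subseteq\{|z|\le\sqrt\alpha\}$ and $E\ge0$ there, integrating $K(z,z)\le mf_\alpha(z)$ over $A$ gives $\E\mathcal{N}_A\le m\mu_\alpha(A)$ immediately; this is the upper bound in both parts.

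For the lower bounds I would use positivity of $E$ on the support to write $\int_A E\le\int_{D_t}E$, where $D_t:=\{|z|^2\le t\}$ and $t$ is (the essential supremum of) $|z|^2$ on $A$, so $\E\mathcal{N}_A\ge m\mu_\alpha(A)-\int_{D_t}E$. A polar-coordinate computation, integrating $E$ termwise, shows the $k$-th term contributes $(n-m)\binom{n-m+k}{k}\int_0^t x^k(1-x)^{n-m-1}\,dx=\mathbb{P}[\mathrm{Beta}(k+1,n-m)\le t]$ to $\int_{D_t}E$ (because $(n-m)\binom{n-m+k}{k}B(k+1,n-m)=1$), so $\int_{D_t}E=\sum_{j\ge m+1}\mathbb{P}[\mathrm{Beta}(j,n-m)\le t]$. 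For $t\le\alpha$, successive terms of the series for $E$ have ratio $\frac{n-m+k+1}{k+1}|z|^2\le\frac{m+\alpha}{m+1}<1$ once $k\ge m$, so the whole tail is at most $\frac{m+1}{1-\alpha}$ times its leading term; integrating, this yields the convenient reduction $\int_{D_t}E\le\frac{m+1}{1-\alpha}\,\mathbb{P}[\mathrm{Beta}(m+1,n-m)\le t]$ for all $t\le\alpha$.

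The heart of the argument is then a single binomial tail estimate. With $t=\alpha(1-\epsilon_m)$, the variable $\mathrm{Beta}(m+1,n-m)$ has mean $\frac{m+1}{n+1}>\alpha$, so $\{\mathrm{Beta}(m+1,n-m)\le t\}$ is a deviation of at least $\alpha\epsilon_m$ below the mean; using $\mathbb{P}[\mathrm{Beta}(m+1,n-m)\le t]=\mathbb{P}[\mathrm{Bin}(n,t)\ge m+1]$ together with a Chernoff/Bernstein bound for the binomial upper tail — the key point being that $t\asymp\alpha$, so the variance $nt(1-t)\asymp m(1-\alpha)$ lies far below the Hoeffding value $n/4$ — the deviation amounts to $\asymp\sqrt{\log(m+1)/(1-\alpha)}$ standard deviations, giving $\mathbb{P}[\mathrm{Beta}(m+1,n-m)\le t]\le C(m+1)^{-1/(1-\alpha)}$ for an absolute constant $C$ (for $m$ large enough that this Chernoff regime is valid; for the finitely many small $m$ the disc $D_t$ is tiny — empty when $m\le 2$ — and $\int_{D_t}E\le m\mu_\alpha(D_t)$ already suffices). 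Combining with the reduction above, $\int_A E\le\int_{D_t}E\le C\,\frac{(m+1)^{-\alpha/(1-\alpha)}}{1-\alpha}$, and a one-variable calculus estimate shows $\frac{(m+1)^{-\alpha/(1-\alpha)}}{1-\alpha}$ is bounded by a universal constant over all $m\ge1$, $\alpha\in(0,1)$; tracking the constants yields $\int_A E\le 4$, which is the lower bound in the second assertion.

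Finally, for the first assertion I would split $D_\alpha=D_{\alpha(1-\epsilon_m)}\sqcup\{\alpha(1-\epsilon_m)<|z|^2\le\alpha\}$: on the inner disc $\int E\le4$ by the second assertion, while on the thin shell $K\ge0$ gives $E\le mf_\alpha$, so that
\[
\int_{\{\alpha(1-\epsilon_m)<|z|^2\le\alpha\}}E\ \le\ m\mu_\alpha(D_\alpha)-m\mu_\alpha(D_{\alpha(1-\epsilon_m)})\ =\ \frac{m\epsilon_m}{1-\alpha(1-\epsilon_m)}\ \le\ \frac{\sqrt{2m\log(m+1)}}{1-\alpha},
\]
using $m\epsilon_m=\sqrt{2m\log(m+1)}$ and $1-\alpha(1-\epsilon_m)\ge1-\alpha$. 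Adding, $\int_A E\le\int_{D_\alpha}E\le 4+\frac{\sqrt{2m\log(m+1)}}{1-\alpha}\le\frac{6\sqrt{2m\log(m+1)}}{1-\alpha}$, since $\frac{\sqrt{2m\log(m+1)}}{1-\alpha}\ge\sqrt{2\log2}>\tfrac45$. The hard part will be the binomial tail estimate: one must beat plain Hoeffding by exploiting that $t$ is small (otherwise one loses a factor of $m$), and then chase the constants carefully enough to land on the clean values $4$ and $6\sqrt2$ uniformly for $\alpha$ bounded away from $0$ and $1$.
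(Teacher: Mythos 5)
Your overall strategy — compute $\E\mathcal{N}_A=\int_A K(z,z)$, use the hypergeometric identity to write $K(z,z)=mf_\alpha-E$ with $E\ge0$, split $D_\alpha$ into the inner disc $D_{\alpha(1-\epsilon_m)}$ and a thin shell, and control the error on each piece — is exactly the paper's. The two estimates themselves, however, are carried out differently, and both of your routes are sound. On the inner disc, the paper interprets the normalized error $(1-|z|^2)^{n-m+1}\sum_{p\ge m}\tfrac{(n-m+1)_p}{p!}|z|^{2p}$ as a negative-binomial tail $\P[Y_{n-m+1}(|z|^2)\ge m]$ and applies a Chernoff bound \emph{pointwise}, then integrates against $mf_\alpha$; the pointwise bound $\le e^{\alpha\epsilon_m}/(m+1)$ makes absorbing the factor $m\mu_\alpha(A)\le m$ completely painless. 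You instead integrate $E$ first, use the ratio $\frac{n-m+k+1}{k+1}|z|^2\le\frac{m+\alpha}{m+1}$ to collapse the series to $\frac{m+1}{1-\alpha}$ times its leading term, and then invoke Beta--Binomial duality and a Chernoff bound on a single binomial tail. That does work, but notice it is more delicate than the paper's version: you incur an explicit prefactor $\frac{m+1}{1-\alpha}$ that must be cancelled by the Chernoff exponent $(m+1)^{-1/(1-\alpha)}$, and the ``universal constant'' claim for $\frac{(m+1)^{-\alpha/(1-\alpha)}}{1-\alpha}$ (together with a sharp enough constant in the binomial tail, uniformly down to $m=3$ where $\epsilon_m$ is close to $1$) would need to be checked honestly rather than waved through; the paper's pointwise route sidesteps this entirely and lands on a final constant $e^{\alpha\epsilon_m}<4$ with no effort. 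On the thin shell your argument is actually \emph{simpler and cleaner} than the paper's: you use $K\ge0$, i.e.\ $E\le mf_\alpha$, so the shell contributes at most $m\mu_\alpha(\mathrm{shell})=\frac{m\epsilon_m}{1-\alpha(1-\epsilon_m)}\le\frac{\sqrt{2m\log(m+1)}}{1-\alpha}$, whereas the paper bounds $\P[Y_{n-m+1}(|z|^2)\ge m]$ by Markov's inequality ($\E Y_k(x)=\tfrac{xk}{1-x}$) and integrates, incurring an extra lower-order term $\frac{\alpha}{(1-\alpha)^2}\sqrt{\tfrac{2\log(m+1)}{m}}$ it then has to absorb. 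Your final arithmetic $4+x\le6x$ for $x\ge4/5$ is fine. In summary: same skeleton, a neater shell bound on your side, but a more constant-sensitive inner-disc bound that would require genuine care to land on the stated $4$, whereas the paper's pointwise Chernoff makes that step nearly free.
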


\begin{proof}
For a determinantal point process on $(\Lambda,\mu)$ with kernel $K$,
the expected number of points in a set $A$ is given by 
\[\mathbb{E}\mathcal{N}(A)= \int_{A} K(x,x)d\mu(x).\] 
%The expected
%number of eigenvalues of the truncation in $A$ is thus
%\[\mathbb{E}\mathcal{N}(A)= \int_{A} K(z,z)d\lambda(z),\]
%where 
%\begin{equation*}
 %   K(z_1, z_2)= \sum_{j=1}^m \frac{1}{N_j} (z_1\overline{z_2})^{j-1}(1-|z_1|^2)^{\frac{n-m-1}{2}}(1-|z_2|^2)^{\frac{n-m-1}{2}}
%\end{equation*}
%on $\{|z|^2\le 1\}^2$, $\lambda$ denotes Lebesgue measure on $\C$, and the normalization factor $N_j$ is given by
%\begin{equation*}
 %   N_j = \frac{\pi (j-1)!(n-m-1)!}{(n-m+j-1)!}.
%\end{equation*}

From the formula for the kernel given in equation \eqref{E:kernel-formula},
\begin{align*}
K(z,z)&=\frac{(1-|z|^2)^{n-m-1}}{\pi}\sum_{j=1}^{m}\frac{(n-m+j-1)!}{(j-1)!(n-m-1)!}|z|^{2(j-1)}\\&=\frac{(n-m)(1-|z|^2)^{n-m-1}}{\pi}\sum_{p=0}^{m-1}\frac{(n-m+1)_{p}}{p!}|z|^{2p},
\end{align*}
where $(a)_{p}=a(a+1)\cdots(a+p-1)$ is the rising Pochhammer symbol.
Letting  \[{}_2F_1(a,b;c;z)=\sum_{n=0}^\infty\frac{(a)_{n}(b)_{n}}{(c)_{n}}\frac{z^n}{n!}\]
denote the hypergeometric function with parameters $a,b,c$, 
\[\sum_{p=0}^{\infty}\frac{(n-m+1)_{p}}{p!}|z|^{2p}={}_2F_1(n-m+1,1;1;|z|^2)=(1-|z|^2)^{-n+m-1}.\]
It follows that 
\begin{equation}\begin{split}\label{E:Kzz-expansion}K(z,z)&=\frac{n-m}{\pi}\left[\frac{1}{(1-|z|^2)^2}-(1-|z|^2)^{n-m-1}\sum_{p=m}^\infty \frac{(n-m+1)_{p}}{p!}|z|^{2p}\right]\\&=mf_\alpha(z)\left[1-(1-|z|^2)^{n-m+1}\sum_{p=m}^\infty \frac{(n-m+1)_{p}}{p!}|z|^{2p}\right],\end{split}\end{equation}
and as an immediate consequence,
\[\E\mathcal{N}_{A}\le \int_{A}mf_\alpha(z)d\lambda(z)=m\mu_\alpha(A).\]

For the lower bound, we first treat the more restrictive case of
\begin{align*}
A\subseteq\left\{|z|^2 \le\alpha \left(1-\sqrt{\frac{2\log(m+1)}{m}}\right)\right\}.
\end{align*}
 Consider the random variable $Y_{k}(x)$ on $\N\cup\{0\}$ with mass function 
\[\P[Y_{k}(x)=p]=\frac{(k)_{p}}{p!}(1-x)^{k}x^p.\]
The moment generating function of $Y_k(x)$ is given by
\[\E[e^{tY_k(x)}]=\sum_{p=0}^\infty \frac{(k)_{p}}{p!}(1-x)^{k}(e^tx)^p=\left[\frac{1-x}{1-xe^t}\right]^k.\]
Now, 
\begin{align*}
(1-|z|^2)^{n-m+1}&\sum_{p=m}^\infty \frac{(n-m+1)_{p}}{p!}|z|^{2p}\\&=\P\left[Y_{n-m+1}(|z|^2)\ge m\right]\le e^{-t m}\left[\frac{1-|z|^2}{1-|z|^2e^t}\right]^{n-m+1},
\end{align*}
for any $t>0$.  Since
$|z|^2<\alpha \left(1-\sqrt{\frac{2\log(m+1)}{m}}\right)<\frac{\alpha n}{n+1}$,
we may choose $t=\log\left(\frac{m}{|z|^2(n+1)}\right)>0.$ 
Then
\begin{align*}
(1-|z|^2)^{n-m+1}&\sum_{p=m}^\infty \frac{(n-m+1)_{p}}{p!}|z|^{2p}\\&\le
\left(\frac{|z|^2(n+1)}{\alpha n}\right)^{\alpha n}\left[\frac{1-|z|^2}{1-\alpha\left(\frac{n}{n+1}\right)}\right]^{n(1-\alpha)+1}.
\end{align*}
For $|z|^2\le \frac{\alpha n}{n+1},$ this last quantity is increasing
in $|z|$; if we further assume that $|z|^2\le\alpha(1-\epsilon_n)$,
we thus have that 

\begin{align*}
(1- & |z|^2)^{n-m+1}\sum_{p=m}^\infty \frac{(n-m+1)_{p}}{p!}|z|^{2p}\\&\le
e^\alpha(1-\epsilon_n)^{\alpha n}\left[\frac{1-\alpha(1-\epsilon_n)}{1-\alpha\left(\frac{n}{n+1}\right)}\right]^{n(1-\alpha)+1}\\&=\exp\left\{\alpha+\alpha   n\log(1-\epsilon_n)+(n(1-\alpha)+1)\log\left(1+\frac{\alpha\left(\epsilon_n-\frac{1}{n+1}\right)}{1-\alpha\left(\frac{n}{n+1}\right)}\right)\right\}\\&\le\exp\left\{\alpha-\alpha
        n\epsilon_n-\frac{\alpha
                                                                                                                                                                                                                                                                                                                      n\epsilon_n^2}{2}+(n+1)\alpha\left(\epsilon_n-\frac{1}{n+1}\right)\right\}\\&=\exp\left\{\alpha\epsilon_n-\frac{\alpha n\epsilon_n^2}{2}\right\}.
\end{align*}
The claimed estimate follows by taking $\epsilon_n=\sqrt{\frac{2\log(m+1)}{m}}=\sqrt{\frac{2\log(\alpha
  n+1)}{\alpha n}}$ (the constant $4$ in the statement is for
concreteness; the actual estimate resulting from this choice of
$\epsilon_n$ is $e^{\sqrt{\frac{2\alpha\log(\alpha n+1)}{n}}}$).

Returning to the more general case, using the expression for $K(z,z)$
in \eqref{E:Kzz-expansion}
\begin{align*}\E\mathcal{N}_A&=m\mu_\alpha(A)-\int_Amf_\alpha(z)(1-|z|^2)^{n-m+1}\sum_{p=m}^\infty\frac{(n-m+1)_{p}}{p!}|z|^{2p}d\lambda(z)\\&\ge m\mu_\alpha(A)-4-\int_{A\cap\left\{\alpha\left(1-\sqrt{\frac{2\log(m+1)}{m}}\right)\le |z|^2\le\alpha\right\}}mf_\alpha(z)(1-|z|^2)^{n-m+1} \\
                                                                                                                                            & \qquad \qquad \qquad \qquad \qquad \qquad \qquad\qquad\times \sum_{p=m}^\infty\frac{(n-m+1)_{p}}{p!}|z|^{2p}d\lambda(z),\end{align*}
making use of the analysis above.  To estimate the remaining integral,
we reconsider the quantity 
\[\P[Y_{n-m+1}(|z|^2)\ge m],\]
this time simply estimating via Markov's inequality.  Given $k$ and
$x$,
\begin{align*}
\E Y_k(x)=\sum_{p=0}^\infty p \frac{(k)_{p}}{p!}(1-x)^{k}x^p=\frac{kx}{1-x}\sum_{\ell=0}^\infty\frac{(k+1)_{\ell}}{\ell!}(1-x)^{k+1}x^\ell=\frac{xk}{1-x},
\end{align*}
and so
\begin{align*}
(1-|z|^2)^{n-m+1}&\sum_{p=m}^\infty \frac{(n-m+1)_{p}}{p!}|z|^{2p}\\&=\P\left[Y_{n-m+1}(|z|^2)\ge m\right]\le \frac{(n-m+1)|z|^2}{m(1-|z|^2)}.
\end{align*}
It follows that 
\begin{align*}
&\int_{A\cap\left\{\alpha\left(1-\sqrt{\frac{2\log(m+1)}{m}}\right)\le
      |z|^2\le\alpha\right\}}mf_\alpha(z)(1-|z|^2)^{n-m+1} \textstyle \sum_{p=m}^\infty\frac{(n-m+1)_{p}}{p!}|z|^{2p}d\lambda(z)\\& \qquad\le
\int_{\left\{\alpha\left(1-\sqrt{\frac{2\log(m+1)}{m}}\right)\le
      |z|^2\le\alpha\right\}}f_\alpha(z)\frac{(n-m+1)|z|^2}{(1-|z|^2)}d\lambda(z)
\\& \qquad\le (n-m+1)\left(\sup_{\left\{\alpha\left(1-\sqrt{\frac{2\log(m+1)}{m}}\right)\le
      |z|^2\le\alpha\right\}}\frac{|z|^2f_\alpha(z)}{(1-|z|^2)}\right)\pi\alpha\sqrt{\frac{2\log(m+1)}{m}}.
\end{align*}

Now,
\[\frac{|z|^2f_\alpha(z)}{1-|z|^2}=\frac{(1-\alpha)|z|^2}{\pi\alpha(1-|z|^2)^3}\le\frac{1}{\pi(1-\alpha)^2}\]
for $|z|^2\le\alpha$,
and so 
\begin{align*}
(n-m+1)&\left(\sup_{\left\{\alpha\left(1-\sqrt{\frac{2\log(m+1)}{m}}\right)\le
      |z|^2\le\alpha\right\}}\frac{|z|^2f_\alpha(z)}{(1-|z|^2)}\right)\pi\alpha\sqrt{\frac{2\log(m+1)}{m}} \\&\le (n-m+1)\tfrac{\alpha}{(1-\alpha)^2}\sqrt{\tfrac{2\log(m+1)}{m}}\\&=\tfrac{1}{1-\alpha}\sqrt{2m\log(m+1)}+\tfrac{\alpha}{(1-\alpha)^2}\sqrt{\tfrac{2\log(m+1)}{m}}.
\end{align*}

It follows that 
\begin{align*}
\E\mathcal{N}_A&\ge m\mu_\alpha(A)-4-\tfrac{1}{1-\alpha}\sqrt{2m\log(m+1)}-\tfrac{\alpha}{(1-\alpha)^2}\sqrt{\tfrac{2\log(m+1)}{m}}.
\end{align*}
Observing that $\max\left\{4,\frac{\alpha}{(1-\alpha)^2}\sqrt{\frac{2\log(m+1)}{m}}\right\}\le\frac{1}{1-\alpha}\sqrt{2m\log(m+1)}$ completes the proof.
\end{proof}

The following is an immediate consequence of Lemma \ref{L:
  expectedval}.

\begin{cor}\label{T:segment-means}
Let $\theta\in(0,2\pi]$ and let $r_i=\frac{i}{\sqrt{n-m+i^2}}$. Let $\mathcal{N}_{i,\theta}$ be defined as above, and suppose that $m \geq 3$ and  $1\le i\le \sqrt{m} \left( 1 - \frac{ \sqrt{\frac{2\log (m+1)}{m}}}{1 - \alpha + \alpha \sqrt{\frac{2 \log (m+1)}{m}}} \right)$. Then
\begin{equation*}
     \left|\E\mathcal{N}_{i,\theta}-i^2-\frac{\theta}{2\pi}\big(2i+1\big)\right|\le 4.
\end{equation*}

\end{cor}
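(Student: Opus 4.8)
The plan is to apply the sharper (second) bound of Lemma~\ref{L: expectedval} with $A=A_{i,\theta}$, once we have an exact formula for $m\mu_\alpha(A_{i,\theta})$ and have checked the containment $A_{i,\theta}\subseteq\{|z|^2\le\alpha(1-\epsilon_m)\}$, with $\epsilon_m=\sqrt{2\log(m+1)/m}$. For the formula, I would first compute the radial cumulative mass: from $\int_0^r\frac{s\,ds}{(1-s^2)^2}=\frac{r^2}{2(1-r^2)}$ and the radial density $f_\alpha$,
\[
\mu_\alpha\big(\{|z|<r\}\big)=\frac{2(1-\alpha)}{\alpha}\int_0^r\frac{s\,ds}{(1-s^2)^2}=\frac{1-\alpha}{\alpha}\cdot\frac{r^2}{1-r^2}\qquad(0<r\le\sqrt\alpha).
\]
Substituting $r=r_j=\frac{j}{\sqrt{n-m+j^2}}$ gives $\frac{r_j^2}{1-r_j^2}=\frac{j^2}{n-m}$, and since $\frac{1-\alpha}{\alpha}=\frac{n-m}{m}$ this collapses to the clean identity $m\mu_\alpha(\{|z|<r_j\})=j^2$ (in particular the annulus $\{r_{j-1}\le|z|<r_j\}$ carries mass $2j-1$, as asserted in the introduction). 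Since $f_\alpha$ is radial, $\mu_\alpha$ of a sector $\{r_i\le|z|<r_{i+1},\ 0<\arg z<\theta\}$ is $\frac{\theta}{2\pi}$ times $\mu_\alpha$ of the full annulus, so
\[
m\mu_\alpha(A_{i,\theta})=i^2+\frac{\theta}{2\pi}\big((i+1)^2-i^2\big)=i^2+\frac{\theta}{2\pi}(2i+1),
\]
which is exactly the quantity in the statement.

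For the containment, every point of $A_{i,\theta}$ has modulus at most $r_{i+1}$, so it suffices to show $r_{i+1}^2\le\alpha(1-\epsilon_m)$. As $r\mapsto m\mu_\alpha(\{|z|<r\})$ is increasing and, by the above, equals $m\big(1-\frac{\epsilon_m}{1-\alpha+\alpha\epsilon_m}\big)$ when $r^2=\alpha(1-\epsilon_m)$, this is equivalent to $(i+1)^2\le m\big(1-\frac{\epsilon_m}{1-\alpha+\alpha\epsilon_m}\big)$; the hypothesis $i\le\sqrt m\big(1-\frac{\epsilon_m}{1-\alpha+\alpha\epsilon_m}\big)$, with $m\ge3$, is what delivers this. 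Granting both ingredients, Lemma~\ref{L: expectedval} yields $m\mu_\alpha(A_{i,\theta})-4\le\E\mathcal N_{i,\theta}\le m\mu_\alpha(A_{i,\theta})$, and the formula above finishes the proof.

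The formula computation is routine, so the only step requiring attention is the elementary inequality verification in the containment: one passes from the bound on $i$ to one on $(i+1)^2$ across the square root relating $\alpha(1-\epsilon_m)$ to the threshold $m\big(1-\frac{\epsilon_m}{1-\alpha+\alpha\epsilon_m}\big)$. For large $m$ there is ample room; for small $m$ the hypothesis makes the set of admissible $i$ empty (since then $\sqrt m\big(1-\frac{\epsilon_m}{1-\alpha+\alpha\epsilon_m}\big)<1$), so the statement holds vacuously. This is why the corollary is genuinely \emph{immediate} from Lemma~\ref{L: expectedval}.
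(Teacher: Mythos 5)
Your approach matches the paper's: compute $m\mu_\alpha(A_{i,\theta})$ exactly and apply the sharper second bound of Lemma~\ref{L: expectedval}. Your route through the cumulative identity $m\mu_\alpha(\{|z|<r_j\})=j^2$ is just a cleaner organization of the same polar-coordinate integration the paper does directly, and the formula $m\mu_\alpha(A_{i,\theta})=i^2+\frac{\theta}{2\pi}(2i+1)$ comes out correctly.

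There is, however, a gap in the step you label as ``immediate.'' Write $\gamma=1-\frac{\epsilon_m}{1-\alpha+\alpha\epsilon_m}$. You correctly reduce the containment to $(i+1)^2\le m\gamma$, but the hypothesis only gives $i\le\sqrt m\,\gamma$, and the implication $i\le\sqrt m\,\gamma\Rightarrow (i+1)^2\le m\gamma$ does \emph{not} hold in general; your fallback (that for small $m$ the admissible range of $i$ is empty) does not close it. The implication fails near the boundary of the admissible range even when that range is nonempty. Concretely, take $m=15$, $n=33$, so $\alpha=15/33$. Then $\epsilon_m\approx 0.608$, $\gamma\approx 0.260$, and $\sqrt m\,\gamma\approx 1.008$, so $i=1$ is admissible; but $m\gamma\approx 3.90<4=(i+1)^2$, or equivalently $r_2^2=4/22\approx 0.1818>\alpha(1-\epsilon_m)\approx 0.1782$, so $A_{1,\theta}\not\subseteq\{|z|^2\le\alpha(1-\epsilon_m)\}$. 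What would actually be needed is $m\gamma(1-\gamma)\ge 2\sqrt m\,\gamma+1$, which holds for large $m$ but can fail exactly when $\sqrt m\,\gamma$ is barely above $1$. The paper's own proof merely asserts the containment without checking it, so the subtlety is inherited from the statement rather than introduced by you; but the specific claim that the hypothesis ``with $m\ge 3$ is what delivers this'' is not correct as written, and that step needs either a strengthened hypothesis on $i$ (e.g.\ $i+1\le\sqrt m\,\gamma$) or an explicit argument showing that the borderline cases cannot occur.
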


\begin{proof}
The condition on $i$ guarantees that $A_{i,\theta}\subseteq\left\{|z|^2\le\alpha \left(1-\sqrt{\frac{2\log(m+1)}{m}}\right)\right\},$ so that the sharper estimate from Lemma \ref{L: expectedval} applies.

For $A_{i,\theta}$ defined as above,
\begin{align*}
\mu_{\alpha} \left( A_{i,\theta} \right) & = \int_0^{2\pi} \int_0^{r_i} \frac{1-\alpha}{\pi \alpha (1-r^2)^2} r drd\theta + \int_0^{\theta} \int_{r_i}^{r_{i+1}} \frac{1-\alpha}{\pi \alpha (1-r^2)^2} r drd\theta \\
& = \frac{1-\alpha}{\alpha} \frac{r_i^2}{1-r_i^2} + \frac{(1-\alpha)\theta}{\alpha 2\pi} \frac{r_{i+1}^2-r_i^2}{\left( 1-r_i^2 \right) \left( 1 - r_{i+1}^2 \right)},
\end{align*}
so that 
\begin{align*}
m\mu_\alpha \left( A_{i,\theta} \right) &=(n-m) \frac{r_i^2}{1-r_i^2} +\frac{(n-m)\theta}{2\pi}\frac{r_{i+1}^2-r_i^2}{\left( 1-r_i^2 \right) \left( 1 - r_{i+1}^2 \right)}\\&=i^2+\frac{\theta}{2\pi}\big(2i+1\big).
\end{align*}
\end{proof}

\medskip

We next estimate the variance of $\mathcal{N}_{i,\theta}$.

\begin{lemma} \label{L: variance}
Let $A_{i,\theta}$ be as above.  There is a constant $C_\alpha$ depending only on $\alpha=\frac{m}{n}$ such that
\begin{equation*}
\var(\n_{i,\theta}) \leq C_\alpha i\sqrt{\log(i)}.
\end{equation*}
\end{lemma}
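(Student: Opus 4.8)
The plan is to use the standard variance formula for a determinantal point process: for a DPP on $(\Lambda,\mu)$ with kernel $K$ and a measurable set $A$,
\[
\var(\n_A)=\int_A K(x,x)\,d\mu(x)-\int_A\int_A |K(x,y)|^2\,d\mu(x)\,d\mu(y)=\int_A\int_{A^c}|K(x,y)|^2\,d\mu(x)\,d\mu(y),
\]
using $\int_\Lambda |K(x,y)|^2\,d\mu(y)=K(x,x)$. So $\var(\n_{i,\theta})\le\int_{A_{i,\theta}}\int_{A_{i,\theta}^c}|K(z_1,z_2)|^2\,d\lambda(z_1)\,d\lambda(z_2)$, and the whole problem reduces to bounding this double integral over pairs $(z_1,z_2)$ with $z_1\in A_{i,\theta}$, $z_2\notin A_{i,\theta}$.

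First I would record the explicit modulus of the kernel from \eqref{E:kernel-formula}. Writing $z_1=\rho_1 e^{i\phi_1}$, $z_2=\rho_2 e^{i\phi_2}$, one has $|K(z_1,z_2)|^2=(1-\rho_1^2)^{n-m-1}(1-\rho_2^2)^{n-m-1}\bigl|\sum_{j=1}^m \frac{1}{N_j}(\rho_1\rho_2)^{j-1}e^{i(j-1)(\phi_1-\phi_2)}\bigr|^2$. Then I would split the boundary contribution into the "radial" part and the "angular" part of $\partial A_{i,\theta}$: the set $A_{i,\theta}$ is (essentially) a disc of radius $r_i$ together with an angular sector in the annulus $r_i\le|z|<r_{i+1}$. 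Points of $A_{i,\theta}$ near the radial boundary $|z|\approx r_i$ can pair with nearby points of $A_{i,\theta}^c$ just outside, and points in the angular sector near $\arg z=\theta$ can pair with points just past the ray. The clean way to organize this is to compare $\n_{i,\theta}$ with the counting function $\n_{D_{r_i}}$ of the full disc of radius $r_i$ (writing $\n_{i,\theta}=\n_{D_{r_i}}+(\text{count in the sector})$), and bound $\var$ of each piece plus the covariance; both the disc and the sector are sets whose boundary has controlled "length" in the relevant sense.

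The key technical estimate is to bound, for fixed $z_1$ with $|z_1|=\rho_1$ close to the relevant radius, the integral $\int_{A^c}|K(z_1,z_2)|^2\,d\lambda(z_2)$, and then integrate over $z_1$. Because $K(z_1,z_1)=mf_\alpha(z_1)\bigl[1-o(1)\bigr]$ in the bulk (this is \eqref{E:Kzz-expansion} and the estimates in Lemma \ref{L: expectedval}), and $mf_\alpha(z)=\frac{(1-\alpha)m}{\pi(1-\alpha|z|^2)^2}$ is of order $C_\alpha m$ there, the "total mass" $\int_\Lambda|K(z_1,z_2)|^2\,d\lambda(z_2)=K(z_1,z_1)$ is of order $C_\alpha m$; the point is that the off-diagonal mass $\int_{A^c}$ is concentrated within distance $O(1/\sqrt m)$ of the boundary, contributing a factor that is a fixed fraction rather than all of $K(z_1,z_1)$, and then integrating the boundary-localized density $z_1\mapsto (\text{const})\cdot m$ over the boundary-collar of area $\sim (\text{boundary length})\times(1/\sqrt m)$. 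For the radial boundary $|z|=r_i$, the arclength is $2\pi r_i\sim 2\pi i/\sqrt m$, giving a contribution of order $m\cdot (i/\sqrt m)\cdot(1/\sqrt m)=O(i)$; for the angular boundary, the relevant length is $r_{i+1}-r_i\sim 1/(\text{stuff})$, also contributing $O(1)$ per unit, total $O(i)$ after summing. The extra $\sqrt{\log i}$ factor comes from the fact that the kernel does not decay like a true Gaussian at scale $1/\sqrt m$ but rather has polynomial tails $|K(z_1,z_2)|\lesssim m(1+\sqrt m\,|z_1-z_2|)^{-1}$ (roughly), so that $\int_{A^c}|K(z_1,z_2)|^2\,d\lambda(z_2)$ over an annular region of width $O(r_i)\sim O(i/\sqrt m)$ accumulates a logarithmic factor $\log(i)$ in the worst direction; taking a square root in the final bound yields $i\sqrt{\log i}$. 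Concretely, I would bound $\bigl|\sum_{j=1}^m\frac{1}{N_j}(\rho_1\rho_2)^{j-1}e^{i(j-1)\psi}\bigr|$ using that $\frac1{N_j}=\frac{(n-m+j-1)!}{\pi(j-1)!(n-m-1)!}$ grows like a polynomial in $j$, summing a geometric-type series in $\rho_1\rho_2 e^{i\psi}$ and keeping track of the $\frac1{|1-\rho_1\rho_2 e^{i\psi}|}$ singularity.

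The main obstacle will be the careful bookkeeping in that last step: getting the correct power of $i$ and isolating exactly where the $\sqrt{\log i}$ enters. In particular one must (i) handle the singularity of the kernel sum at $\rho_1\rho_2 e^{i\psi}=1$ — i.e. when $z_1$ and $z_2$ are both near the unit circle and nearly aligned — which is why the hypothesis restricting $i\le\sqrt m(1-\cdots)^{1/2}$ keeping $A_{i,\theta}$ strictly inside $|z|^2\le\alpha(1-\eps_m)$ is essential (it keeps $\rho_1\rho_2\le\alpha(1-\eps_m)$ bounded away from $1$, so the relevant factors $(1-\rho_1^2)^{n-m-1}$ decay and tame the sum), and (ii) integrate the resulting bound over the boundary collar in both the radial and angular directions and check the logarithm. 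Since only the final order of magnitude $C_\alpha i\sqrt{\log i}$ is claimed, constants can be absorbed freely into $C_\alpha$, which simplifies the estimates considerably; I would not attempt to optimize them.
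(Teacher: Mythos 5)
Your proposal starts from the same variance identity the paper uses,
\[
\var(\n_{i,\theta})=\int_{A_{i,\theta}}\int_{A_{i,\theta}^c}|K(z,w)|^2\,d\lambda(w)\,d\lambda(z),
\]
but from there you take a genuinely different route, and that route has a gap at exactly the point where the bound must be extracted. You propose to estimate the inner integral by proving an off-diagonal decay bound of the form $|K(z_1,z_2)|\lesssim m(1+\sqrt m\,|z_1-z_2|)^{-1}$ and attribute the $\sqrt{\log i}$ in the final bound to the resulting polynomial tails. This is not correct. In the bulk (which is precisely where the constraint on $i$ puts you), $\sum_j\frac1{N_j}(z\bar w)^{j-1}$ is well-approximated by the full geometric series $\frac{n-m}{\pi(1-z\bar w)^{n-m+1}}$, and combining with the prefactors $(1-|z|^2)^{(n-m-1)/2}(1-|w|^2)^{(n-m-1)/2}$ gives $|K(z,w)|\approx\frac{n-m}{\pi}\left(\frac{(1-|z|^2)(1-|w|^2)}{|1-z\bar w|^2}\right)^{(n-m-1)/2}\cdot\frac{1}{|1-z\bar w|}$, whose off-diagonal decay is essentially Gaussian at scale $1/\sqrt{n-m}$, not polynomial. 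With Gaussian decay the boundary-collar heuristic gives $\var\sim(\text{boundary length})\times\sqrt{n-m}\sim i$, with no logarithm at all; the $\sqrt{\log i}$ in the lemma is an artifact of the method, and cannot be produced by the tail argument you sketch.

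The paper avoids having to prove any off-diagonal decay estimate on $K$. It splits the double integral into four pieces $V_1,\ldots,V_4$ according to the geometric decomposition of $A_{i,\theta}$ and $A_{i,\theta}^c$, integrates each in polar coordinates (so that for $V_1,V_2,V_3$ the angular integrals kill all terms with $j\neq k$), and then, via repeated integration by parts, recognizes the one-dimensional radial integrals $2(n-m)\binom{n-m+j}{j}\int_0^{r}(1-t^2)^{n-m-1}t^{2j+1}\,dt$ and its complement as binomial tail probabilities $\P[Y_j>j]$ and $\P[X_j\le j]$ with $X_j,Y_j\sim\mathrm{Binom}(n-m+j,\cdot)$. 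This converts the two-dimensional kernel estimate into a one-dimensional sum over $j$ of products of binomial tails, which is then bounded termwise via Bernstein's inequality. The $\sqrt{\log i}$ enters as the width $\approx i\sqrt{\log i}$ of the window of $j$ around $i^2$ on which one simply uses the trivial bound $\P[\cdot]\le1$, because that is the window on which Bernstein is not yet summably small; outside that window the Bernstein tails sum to $O(i)$. Your proposal would need to be rebuilt around a concrete, correct decay estimate (Gaussian, not polynomial), and even then would likely produce $O(i)$ rather than $O(i\sqrt{\log i})$ and would require a separate argument to control the kernel sum precisely near the edge $\rho_1\rho_2\to1$ rather than relying on an unproven tail. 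The integration-by-parts-to-binomial trick is the key technical move you are missing.
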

\begin{proof}
%Since $K$ is Hermitian and $\int K(z,y)K(y,w)dy = K(z,w)$, by an
By an argument similar to the one in \cite[Appendix B]{Gus}, 
  \begin{equation}
    \label{E:variance}
    \begin{split}
      \var(\n_{i,\theta})
      &= \int\limits_{\{\abs{z} <r_i\}} \int\limits_{\{\abs{w} \ge r_{i+1}\}}
      \abs{K(z,w)}^2 \ dw \ dz \\
      & \quad + \int\limits_{\{\abs{z} <r_i\}} \int\limits_{\{r_i\le \abs{w} <r_{i+1},\ \theta\le \arg w \le 2\pi\}} 
      \abs{K(z,w)}^2 \ dw \ d z \\
      & \quad + \int\limits_{\{r_i\le \abs{z} < r_{i+1},\ 0<\arg z  \le \theta\}}
      \int\limits_{\{\abs{w} \ge r_{i+1}\}} \abs{K(z,w)}^2 \ dw \ dz \\
      & \quad + \int\limits_{\{r_i\le \abs{z} < r_{i+1},\ 0<\arg z  \le \theta\}}
      \int\limits_{\{r_i \le \abs{w} <r_{i+1}, \ 0<\arg w \ge \theta\}} \abs{K(z,w)}^2 \ dw \ dz \\&=:V_1+V_2+V_3+V_4
    \end{split} 
  \end{equation}
  Observe that for $r_1,r_2\le 1$,
  \begin{align*}
  &\left|K(r_1 e^{i\varphi_1}, r_2 e^{i \varphi_2})\right|^2\\&\qquad = \frac{(n-m)^2}{\pi^2}(1-r_1^2)^{n-m-1}(1-r_2^2)^{n-m-1}\\&\qquad\qquad\qquad\qquad\sum_{j,k=0}^{m-1}\binom{n-m+j}{j}\binom{n-m+k}{k}(r_1r_2)^{j+k}e^{i(j-k)(\varphi_1-\varphi_2)}.
  \end{align*}

Integrating in polar coordinates gives that 
\begin{align}\label{E:I1}
\begin{split}
V_1& =4(n-m)^2 \\
& \quad \times \sum \limits_{j=0}^{m-1}\binom{n-m+j}{j}^2\int_0^{r_i} (1-r^2)^{n-m-1}r^{2j+1}dr\int_{r_{i+1}}^1 (1-r^2)^{n-m-1}r^{2j+1}dr,
\end{split}
\end{align}
since the angular integrals vanish unless $j=k$.  
%Now,
%\[2(n-m)\binom{n-m+j}{j}\int_0^1(1-r^2)^{n-m-1}r^{2j+1}dr=1,\]
%as can be seen by repeated integration by parts.
By repeated applications of integration by parts, 
\begin{align*} 2(n-m)&\binom{n-m+j}{j}\int_{0}^{r_i}
                       (1-r^2)^{n-m-1}r^{2j+1}dr\\&=\sum_{\ell=j+1}^{n-m+j}\binom{n-m+j}{\ell}r_{i}^{2\ell}(1-r_{i}^2)^{n-m+j-\ell},
                                                    \end{align*}
which is exactly $\P[Y_j> j]$ for
$Y_j\sim\mathrm{Binom}(n-m+j,r_{i}^2).$ Similarly,
\begin{align*} 2(n-m)&\binom{n-m+j}{j}\int_{r_{i+1}}^1
                       (1-r^2)^{n-m-1}r^{2j+1}dr\\&=\sum_{\ell=0}^j\binom{n-m+j}{\ell}r_{i+1}^{2\ell}(1-r_{i+1}^2)^{n-m+j-\ell},
                                                    \end{align*}
which is $\P[X_j\le j]$ for
$X_j\sim\mathrm{Binom}(n-m+j,r_{i+1}^2).$ 

It thus follows from \eqref{E:I1} that
\begin{equation}\label{E:V1}V_1=\sum_{j=0}^{m-1}\P[Y_j>j]\P[X_j\le j]\le\sum_{j=0}^{i^2-1}\P[X_j\le j]+\sum_{j=i^2}^{m-1}\P[Y_j>j].\end{equation}

For the first sum, observe that $\E X_j=\frac{(n-m+j)(i+1)^2}{n-m+(i+1)^2}>j$ for $j\le i^2-1$.  By Bernstein's inequality,
\begin{equation}\begin{split}\label{E:Binom-Bernstein-1}
\P[X_j\le j]&=\P\left[\E X_j-X_j\ge\frac{(n-m)((i+1)^2-j)}{n-m+(i+1)^2}\right]\\&\le \exp\left\{-\min\left\{\frac{(n-m)((i+1)^2-j)^2}{2(n-m+j)(i+1)^2},\frac{(n-m)((i+1)^2-j)}{2(n-m+(i+1)^2)}\right\}\right\}.
\end{split}\end{equation}

The first term of the minimum is smaller exactly when $j\ge j_0:=\frac{(i+1)^4}{n-m+2(i+1)^2}$.  Note that $j_0\le \frac{(i+1)^2}{2}$, so that 
\begin{align*}\sum_{j=0}^{j_0}\exp\left\{-\frac{(n-m)((i+1)^2-j)}{2(n-m+(i+1)^2)}\right\}&\le\frac{(i+1)^2}{2}\exp\left\{-\frac{(n-m)(i+1)^2}{4(n-m+(i+1)^2)}\right\}\\&\le\frac{(i+1)^2}{2}\exp\left\{-\frac{(1-\alpha)(i+1)^2}{4}\right\},\end{align*}
which is bounded independent of $i$.

Now consider
\begin{align*}\sum_{j=j_0}^{i^2-1}&\P[X_j\le j]\\&\le\sum_{j=j_0}^{(i+1)^2-(i+1)\sqrt{\frac{2\log(i+1)}{1-\alpha}}}\exp\left\{-\frac{(n-m)((i+1)^2-j)^2}{2(n-m+j)(i+1)^2}\right\}+(i+1)\sqrt{\frac{2\log(i+1)}{1-\alpha}}\\&\le (i+1)+\frac{(i+1)\sqrt{2\log(i+1)}}{\sqrt{1-\alpha}},\end{align*}
where we have used the fact that the summand in the second line is increasing in $j$ and bounded by $\frac{1}{i+1}$ at the upper limit of the sum.

For the second sum of Equation \eqref{E:V1}, we again apply Bernstein's inequality: 
\begin{equation}\begin{split}\label{E:Binom-Bernstein-2}
\P[Y_j> j]&=\P\left[Y_j-\E Y_j> j-(n-m+j)r_{i}^2\right]\\&\le \exp\left\{-\min\left(\frac{(j-(n-m+j)r_{i}^2)^2}{2(n-m+j)r_{i}^2(1-r_{i}^2)},\frac{j- (n-m+j)r_{i}^2}{2}\right)\right\}\\&=\exp\left\{-\min\left\{\frac{(n-m)(j-i^2)^2}{2(n-m+j)i^2},\frac{(n-m)(j-i^2)}{2(n-m+i^2)}\right\}\right\}.
\end{split}\end{equation}

The change in behavior of the bound is at $j=j_1:=\frac{i^2[2(n-m)+i^2]}{n-m}$.  Note that $j_1\le i^2\left(2+\frac{\alpha}{1-\alpha}\right)$ since $i^2\le m$.  Decomposing as before,
\begin{align*}
\sum_{j=i^2}^{m-1}\P[Y_j>j]\le & i\sqrt{\frac{2\log(i)}{1-\alpha}}+\sum_{j=i^2+i\sqrt{\frac{2\log(i)}{1-\alpha}}}^{j_1}\exp\left\{-\frac{(n-m)(j-i^2)^2}{2(n-m+j)i^2}\right\}\\&\qquad +\sum_{j=j_1}^{m-1}\exp\left\{-\frac{(n-m)(j-i^2)}{2(n-m+i^2)}\right\}\\&\le i\sqrt{\frac{2\log(i)}{1-\alpha}}+i\left(2+\frac{\alpha}{1-\alpha}\right) +\sum_{j=j_1}^{m-1}\exp\left\{-\frac{(n-m)(j-i^2)}{2(n-m+i^2)}\right\}.
\end{align*}

This last sum is
\begin{align*} \begin{split}
e^{\frac{(n-m)i^2}{2(n-m+i^2)}}  \sum_{j=j_1}^{m-1} e^{-\frac{(n-m)j}{2(n-m+i^2)}} &\le e^{\frac{(n-m)i^2}{2(n-m+i^2)}} \left[\frac{e^{-\frac{(n-m)j_1}{2(n-m+i^2)}}}{1-e^{-\frac{(n-m)}{2(n-m+i^2)}}}\right]=\frac{e^{\frac{-i^2(n-m+i^2)}{2(n-m+i^2)}}}{1-e^{-\frac{(n-m)}{2(n-m+i^2)}}},
\end{split} \end{align*}
which is bounded independent of $i$.
Collecting terms, we have 
\[V_1\le C_\alpha i\sqrt{\log(i)}\]
for a constant $C_\alpha$ depending only on $\alpha$.

The remaining terms of \eqref{E:variance} are estimated similarly.  For $V_2$, integrating in polar coordinates gives that
\begin{align*}
V_2&=4(n-m)^2\left(1-\frac{\theta}{2\pi}\right)\sum_{j=0}^{m-1}\binom{n-m+j}{j}^2\\
& \quad \times \int_0^{r_i}(1-r^2)^{n-m-1}r^{2j+1}dr\int_{r_i}^{r_{i+1}}(1-r^2)^{n-m-1}r^{2j+1}dr\\
& \leq 	4(n-m)^2\left(1-\frac{\theta}{2\pi}\right)\sum_{j=0}^{m-1}\binom{n-m+j}{j}^2\\
& \quad \times \int_0^{r_i}(1-r^2)^{n-m-1}r^{2j+1}dr\int_{r_i}^{1}(1-r^2)^{n-m-1}r^{2j+1}dr.
\end{align*}
Proceeding exactly as for $V_1$,
\begin{align*} \begin{split}
V_2 & \leq \left( 1 - \frac{\theta}{2\pi} \right) \sum_{j=0}^{m-1} \mathbb{P} \left[Y_j > j \right] \mathbb{P} \left[ Y_j \leq j \right] \\
& \leq \left( 1 - \frac{\theta}{2\pi} \right) \left( \sum_{j=0}^{i^2-1} \mathbb{P} \left[ Y_j \leq j \right] + \sum_{j=i^2}^{m-1} \mathbb{P} \left[ Y_j > j \right] \right) \\
& \leq \left( 1 - \frac{\theta}{2\pi} \right) C_\alpha i\sqrt{\log(i)},
\end{split}\end{align*}
where $Y_j \sim$ Binom$(n-m+j, r_i^2)$. 

Integrating in polar coordinates and proceeding as above,
\begin{align*}
V_3&=4(n-m)^2\tfrac{\theta}{2\pi}\textstyle \sum_{j=0}^{m-1}\tbinom{n-m+j}{j}^2 \int_{r_i}^{r_{i+1}}(1-r^2)^{n-m-1}r^{2j+1}dr\int_{r_{i+1}}^{1}(1-r^2)^{n-m-1}r^{2j+1}dr\\
	&\leq 4(n-m)^2\tfrac{\theta}{2\pi}\textstyle \sum_{j=0}^{m-1}\tbinom{n-m+j}{j}^2 \int_{0}^{r_{i+1}}(1-r^2)^{n-m-1}r^{2j+1}dr\int_{r_{i+1}}^{1}(1-r^2)^{n-m-1}r^{2j+1}dr\\
	& = \frac{\theta}{2\pi} \sum_{j=0}^{m-1} \mathbb{P} \left[X_j >j \right] \mathbb{P} \left[ X_j \leq j \right] \\
	%& \leq \frac{\theta}{2\pi} \left( \sum_{j=0}^{i^2-1} \mathbb{P} \left[X_j \leq j \right] + \sum_{j=i^2}^{m-1} \mathbb{P} \left[ X_j > j \right] \right) \\
%	& \leq \frac{\theta}{2\pi} \left( i^2 + C' \left( i^2 + 4i+ 1 + \frac{(i+1)^4}{n-m} + \frac{8}{3} \left( 1 + \frac{(i+1)^2}{n-m} \right) \right) \right)\\
	& \leq \frac{\theta}{2\pi}C_\alpha i\sqrt{\log(i)},
	\end{align*}
	where $X_j \sim$ Binom$(n-m+j, r_{i+1}^2)$. 
The final integral in \eqref{E:variance} is
\begin{align*}
V_4&=\frac{(n-m)^2}{\pi^2}\sum_{j,k=0}^{m-1}\binom{n-m+j}{j}\binom{n-m+k}{k}\\
& \quad \times \left(\int_{r_i}^{r_{i+1}}(1-r^2)^{n-m-1}r^{j+k+1}dr\right)^2 \int_0^{\theta}e^{i(j-k)\phi}d\phi\int_{\theta}^{2\pi}e^{i(k-j)\phi}d\phi.
\end{align*}
For $j\neq k$,
\begin{align*}
\int_{\theta}^{2\pi}e^{i(k-j)\phi}d\phi = -\int_0^{\theta}e^{i(k-j)\phi}d\phi = - \overline{\int_0^{\theta}e^{i(j-k)\phi}d\phi}.
\end{align*}
Therefore, if $j\neq k$ in the sum the term is negative. Thus
\begin{align*}
V_4&\leq4(n-m)^2\left(1-\frac{\theta}{2\pi}\right)\frac{\theta}{2\pi}\sum_{j=0}^{m-1}\binom{n-m+j}{j}^2\\
& \quad \times \left(\int_{r_i}^{r_{i+1}}(1-r^2)^{n-m-1}r^{2j+1}dr\right)^2\\
& \leq4(n-m)^2\left(1-\frac{\theta}{2\pi}\right)\frac{\theta}{2\pi}\sum_{j=0}^{m-1}\binom{n-m+j}{j}^2 \\
& \quad \times \int_{0}^{r_{i+1}}(1-r^2)^{n-m-1}r^{2j+1}dr\int_{r_i}^{1}(1-r^2)^{n-m-1}r^{2j+1}dr\\
& = \left(1-\frac{\theta}{2\pi}\right)\frac{\theta}{2\pi} \sum_{j=0}^{m-1} \mathbb{P} \left[X_j > j \right] \mathbb{P} \left[Y_j \leq j \right] \\
& \leq \left(1-\frac{\theta}{2\pi}\right)\frac{\theta}{2\pi}C_\alpha i\sqrt{\log(i)}.
\end{align*}
All together then,
\begin{align*}
 \var(\n_{i,\theta})& = V_1+V_2+V_3+V_4\leq C_\alpha i\sqrt{\log(i)}
\end{align*}
for a constant $C_\alpha$ depending only on $\alpha$.
\end{proof}

\section{Concentration} \label{S: rigidity}

We now move on to 
concentration for the counting functions $\mathcal{N}_{i,\theta}$.
The key ingredient is the following general result on
determinantal point processes.
\begin{thm}[Hough--Krishnapur--Peres--Vir\'ag \cite{HKPV}]\label{T:HKPV}
Let $\Lambda$ be a locally compact Polish space and $\mu$ a Radon
measure on $\Lambda$.  Suppose that
$K:\Lambda\times\Lambda\to\C$ is the kernel of a determinantal point
process, such that the
corresponding integral operator $\mathcal{K}:L^2(\mu)\to L^2(\mu)$
defined by 
\[[\mathcal{K}f](x)=\int_\Lambda K(x,y)f(y)d\mu(y)\]
is self-adjoint, nonnegative, and locally trace-class.  Let $D\subseteq\Lambda$ be such that the
restriction $K_D(x,y)=\ind{D}(x)K(x,y)\ind{D}(y)$ defines a
trace-class operator $\mathcal{K}_D$ on $L^2(\mu)$.  Then the number of points $\mathcal{N}_D$ lying in $D$ of
the process governed by $K$ is distributed as $\sum_k\xi_k$, where the
$\xi_k$ are independent Bernoulli random variables whose means are
given by the eigenvalues of the operator $\mathcal{K}_D$.  
\end{thm}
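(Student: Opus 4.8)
The plan is to prove the statement in two stages: first for determinantal processes whose kernel operator is a finite-rank orthogonal projection, where the point count is almost surely constant, and then for the general trace-class case by realizing the process as a mixture of such projection processes.

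\textbf{Reduction to $D$.} First I would note that the restriction to $D$ of a determinantal process on $(\Lambda,\mu)$ is again determinantal, now on $L^2(D,\mu)$, with kernel $K_D(x,y)=\ind{D}(x)K(x,y)\ind{D}(y)$: its correlation functions are the restrictions of the original correlation functions, which are the minors $\det\bigl(K(x_i,x_j)\bigr)$. Thus $\mathcal{N}_D$ is the total number of points of the determinantal process with kernel $K_D$, and it suffices to prove: if $\mathcal{K}$ is a self-adjoint, nonnegative, trace-class operator on $L^2(\mu)$ arising as the kernel of a point process, then the total count $\mathcal{N}$ has the law of $\sum_k\xi_k$ with $\xi_k$ independent Bernoulli's with parameters the eigenvalues $\lambda_k$ of $\mathcal{K}$. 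Since $\mathcal{K}$ is the kernel of a point process, its eigenvalues necessarily lie in $[0,1]$ (otherwise some count would have negative variance), and since $\mathcal{K}$ is trace-class, $\sum_k\lambda_k=\tr\mathcal{K}<\infty$.

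\textbf{The projection case.} Suppose $\mathcal{K}$ is the orthogonal projection onto an $N$-dimensional space $\mathrm{span}\{\phi_1,\dots,\phi_N\}$. Then $\E\mathcal{N}=\tr\mathcal{K}=N$, and from the two-point correlation formula one computes $\E[\mathcal{N}(\mathcal{N}-1)]=(\tr\mathcal{K})^2-\tr(\mathcal{K}^2)$, hence $\var(\mathcal{N})=\tr\mathcal{K}-\tr(\mathcal{K}^2)$; since $\mathcal{K}^2=\mathcal{K}$ for a projection, $\var(\mathcal{N})=0$, so $\mathcal{N}=N$ almost surely. This is the claimed conclusion in the case where $N$ of the $\lambda_k$ equal $1$ and the rest are $0$.

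\textbf{General case via mixtures.} Diagonalize $K(x,y)=\sum_k\lambda_k\phi_k(x)\overline{\phi_k(y)}$ with $\{\phi_k\}$ orthonormal and $\lambda_k\in[0,1]$. Let $\xi_k$ be independent Bernoulli$(\lambda_k)$, put $S=\{k:\xi_k=1\}$ (a.s.\ finite by Borel--Cantelli, since $\sum_k\lambda_k<\infty$), and let $\mathcal{K}_S$ be the orthogonal projection onto $\mathrm{span}\{\phi_k:k\in S\}$. I would then show that mixing the projection determinantal processes with kernels $K_S$ over the law of $S$ yields a determinantal process with kernel $K$; concretely, one checks at the level of correlation functions that
\[\E_S\Bigl[\det\bigl(K_S(x_i,x_j)\bigr)_{i,j\le r}\Bigr]=\det\bigl(K(x_i,x_j)\bigr)_{i,j\le r}\]
for every $r$, which follows by expanding both determinants via the Cauchy--Binet formula in terms of the $\phi_k$ and matching the coefficient of each monomial, using $\E\prod_{k\in T}\xi_k=\prod_{k\in T}\lambda_k$. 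Since a point process is determined by its correlation functions (equivalently, a determinantal process is determined by its kernel), the mixture equals the original process. Conditionally on $S$, the count is $|S|$ by the projection case, so unconditionally $\mathcal{N}\stackrel{d}{=}|S|=\sum_k\xi_k$, as claimed.

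\textbf{Main obstacle.} The technical heart is the mixture identity: verifying that averaging the projection kernels $K_S$ reproduces $K$ across all correlation functions, together with the uniqueness statement that pins down the process from its kernel. A route that sidesteps this combinatorics is to compute the probability generating function directly: the inclusion--exclusion expansion gives $\E[z^{\mathcal{N}}]=\sum_{r\ge0}\frac{(z-1)^r}{r!}\int_{\Lambda^r}\det\bigl(K(x_i,x_j)\bigr)\,d\mu^{\otimes r}=\det(I+(z-1)\mathcal{K})=\prod_k\bigl(1+(z-1)\lambda_k\bigr)=\prod_k\E[z^{\xi_k}]$, where the identification with the Fredholm determinant and the convergence of the series use exactly the trace-class hypothesis on $\mathcal{K}_D$; since the generating function determines the law of an $\N_0$-valued random variable, this also yields the conclusion. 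Either way, the trace-class assumption is precisely what guarantees that the relevant infinite sums, spectral expansions, and Fredholm determinants converge.
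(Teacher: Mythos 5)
The paper does not prove this theorem; it is quoted directly from Hough--Krishnapur--Peres--Vir\'ag \cite{HKPV} and used as a black box, so there is no proof in the paper against which to compare. Your sketch correctly reproduces the standard argument from that reference: the reduction to $D$, the projection case via $\var(\mathcal{N})=\tr\mathcal{K}_D-\tr(\mathcal{K}_D^2)$, and the mixture-of-projections decomposition, with the Cauchy--Binet identification of correlation functions being exactly the computation in \cite{HKPV}. The alternative generating-function route via $\E[z^{\mathcal{N}_D}]=\det\bigl(I+(z-1)\mathcal{K}_D\bigr)=\prod_k\bigl(1+(z-1)\lambda_k\bigr)$ is also valid and is arguably cleaner, since it sidesteps the uniqueness-from-correlation-functions issue, reducing it to the elementary fact that the probability generating function determines the law of a nonnegative-integer-valued random variable. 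The only points you gloss over, all standard, are (a) that the eigenvalues of a self-adjoint kernel of a determinantal process lie in $[0,1]$ (this is a theorem of Macchi and Soshnikov, not an immediate variance observation, since one must localize to extract it from $\var(\mathcal{N}_A)\ge 0$), and (b) the moment-growth estimate justifying that correlation functions determine the law, which for Hermitian determinantal kernels follows from Hadamard's inequality. Neither is a gap in substance, merely in detail.
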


It is not hard to see that the kernel given in \eqref{E:kernel-formula} has the properties required by
Theorem \ref{T:HKPV}, and so the random variable
$\mathcal{N}_{i,\theta}$ is distributed as a sum of independent
Bernoulli random variables.  It is thus an immediate consequence of
Bernstein's inequality that
\begin{equation}\label{E:Bernstein}
\P\left[\big|\mathcal{N}_{i,\theta}-\E\mathcal{N}_{i,\theta}\big|>t\right]\le 2\exp\left(-\min\left\{\frac{t^2}{2\sigma^2},\frac{t}{2}\right\}\right).
\end{equation}
This is the key observation underlying the proof of Theorem \ref{L:bernstein}.

\begin{proof}[Proof of Theorem \ref{L:bernstein}]
For the first claim, the assumption on $i$ implies that
\begin{align*}
A_{i, \theta} \subseteq \sqrt{\alpha} \left( 1 - \sqrt{ \frac{2 \log (m+1)}{m}} \right)^{\frac{1}{2}} D
\end{align*}
where $D$ is the unit disc, so that by lemmas \ref{L: expectedval} and \ref{L: variance} together with Bernstein's inequality, if $t>4$, then
\begin{align*} \begin{split}
\mathbb{P} \left[\left| m\mu_{\alpha} \left(A_{i,\theta}\right) - \mathcal{N} \left( A_{i, \theta} \right)\right| \geq t \right]  & \leq \mathbb{P} \left[ \left|\mathbb{E} \mathcal{N} \left( A_{i,\theta} \right) - \mathcal{N} \left( A_{i, \theta} \right)\right| \geq t -4 \right] \\
& \leq \exp \left[ - \min \left\{ \frac{(t-4)^2}{C_\alpha i\sqrt{\log(i)}}, \frac{t-4}{2} \right\} \right].
\end{split} \end{align*}
 If $t \geq 8$, then $t-4 \geq t/2$, so that
\begin{align*}
\mathbb{P} \left[\left| m\mu_{\alpha} \left(A_{i,\theta}\right) - \mathcal{N} \left( A_{i, \theta} \right)\right| \geq t \right] & \leq  \exp \left[ - \min \left\{ \frac{t^2}{C_\alpha i\sqrt{\log(i)})}, \frac{t}{4} \right\} \right];
\end{align*}
if $t < 8$, then 
\begin{align*}
\exp \left[ - \min \left\{ \frac{t^2}{C_\alpha i\sqrt{\log(i)}}, \frac{t}{4} \right\} \right] > e^{- 2}
\end{align*}
and the first claim follows. The proof of the second claim is an immediate consequence of the second estimate of Lemma \ref{L: expectedval} together with Lemma \ref{L: variance}.
\end{proof}

We now focus our attention on individual eigenvalues.  
 Given $1\leq p \leq m$, let $l=\ceil{\sqrt{p}}$ and $q= p
- (l-1)^2$, so that $p = (l-1)^2+q$ and $1\leq q \leq 2l-1$. Let 
\[r_l=\frac{l}{\sqrt{n-m+l^2}}.\]
The
predicted locations $\tilde{\lambda}_p$ for the eigenvalues are
defined by 
\begin{equation*}
    \tilde{\lambda}_p = r_{l-1}e^{2\pi i q/(2l-1) }=\frac{l-1}{\sqrt{n-m+(l-1)^2}}e^{2\pi i q/(2l-1) }.
\end{equation*}
To shed some light on this choice, consider the annulus $A_l$ with inner radius $r_{l-1}$ and outer radius $r_{l}.$ Then
\begin{align*}
    \mu_{\alpha}\left( A_l\right) & = 2\pi \int_{r_{l-1}}^{r_{l}} \frac{1-\alpha}{\pi \alpha (1-r^2)^2}rdr \\
    & = \frac{1-\alpha}{\alpha} \left[ \frac{r_{l}^2 - r_{l-1}^2}{\left( 1 - r_{l}^2 \right) \left( 1 - r_{l-1}^2 \right)} \right]\\
    & = \frac{1-\alpha}{\alpha} \left[ \frac{ \frac{l^2}{n-m+l^2} - \frac{(l-1)^2}{n-m+(l-1)^2}}{\left( 1 - \frac{(l-1)^2}{n-m+(l-1)^2} \right) \left(1 - \frac{l^2}{n-m+l^2} \right)} \right]\\
  %  & = \frac{1-\alpha}{\alpha} \left[ \frac{l^2(n-m+(l-1)^2) - (l-1)^2(n-m+l^2)}{(n-m)^2} \right]\\
    & = \frac{1-\alpha}{\alpha} \left[ \frac{(n-m)(2l-1)}{(n-m)^2} \right]\\
    & = \frac{2l-1}{m}.
\end{align*}
It follows from Lemma \ref{L: expectedval} that the expected number of
eigenvalues in $A_l$ is approximately $2l-1.$ 

\begin{proof}[Proof of Theorem \ref{L: individual}]
The essential idea of the proof is that if $\lambda_p$ is far from its
predicted location $\tilde{\lambda}_p$, then there is either a set of the form
$A_{\ell,\theta}$ with substantially more eigenvalues than predicted
by the mean (if $\lambda_p$ comes early) or a set of the form
$A_{\ell,\theta}$ with substantially fewer eigenvalues than predicted
by the mean (if $\lambda_p$ comes late).  Theorem \ref{L:bernstein}
then gives control  on the probabilities of such events.

To implement this strategy, several cases must be considered, which we
first outline here.
\begin{enumerate}[label=(\Roman*)]
\item $\lambda_p\prec\tilde{\lambda}_p$
\begin{enumerate}[label=(\Alph*)]
\item $\frac{s}{2(l-1)} < \frac{2\pi q}{2l-1}$ 
\item $\frac{2\pi q}{2l-1} \leq \frac{s}{2(l-1)}$ 
\end{enumerate}
\item $\lambda_p\succ\tilde{\lambda}_p$
\begin{enumerate}[label=(\Alph*)]
\item $\frac{s}{2(l-1)} < 2\pi - \frac{2\pi q}{2l-1}$ 
\item $2\pi - \frac{2\pi q}{2l-1} \leq \frac{s}{2(l-1)} \leq \pi$ 
\item $\pi < \frac{s}{2(l-1)} \leq \frac{ \sqrt{m} \left( 1 - \frac{ \epsilon_m}{1-\alpha ( 1- \epsilon_m)} \right)^{\frac{1}{2}} + l-1}{2(l-1)}$  
\item $\frac{ \sqrt{m} \left( 1 - \frac{ \epsilon_m}{1-\alpha ( 1- \epsilon_m)} \right)^{\frac{1}{2}} + l-1}{2(l-1)} < \frac{s}{2(l-1)}$ 
\end{enumerate}
\end{enumerate}
Combining cases (A) and (B) from both I and II  yields the first part of the lemma (small $s$) and combining (C) and (D) of II gives the second part of the lemma (large $s$). 

In most of the cases we will make use of the fact that 
\begin{align}\label{E: dist-in-circle}
    |Re^{i\theta}-re^{i\phi}| \leq ra(\theta, \phi) + |R-r|,
\end{align}
where $a(\theta, \phi)$ denotes the length of the shorter arc on the unit circle between $e^{i\theta}$ and $e^{i\phi}$. \\
\begin{figure}
\begin{center}
\includegraphics[width=0.8\textwidth]{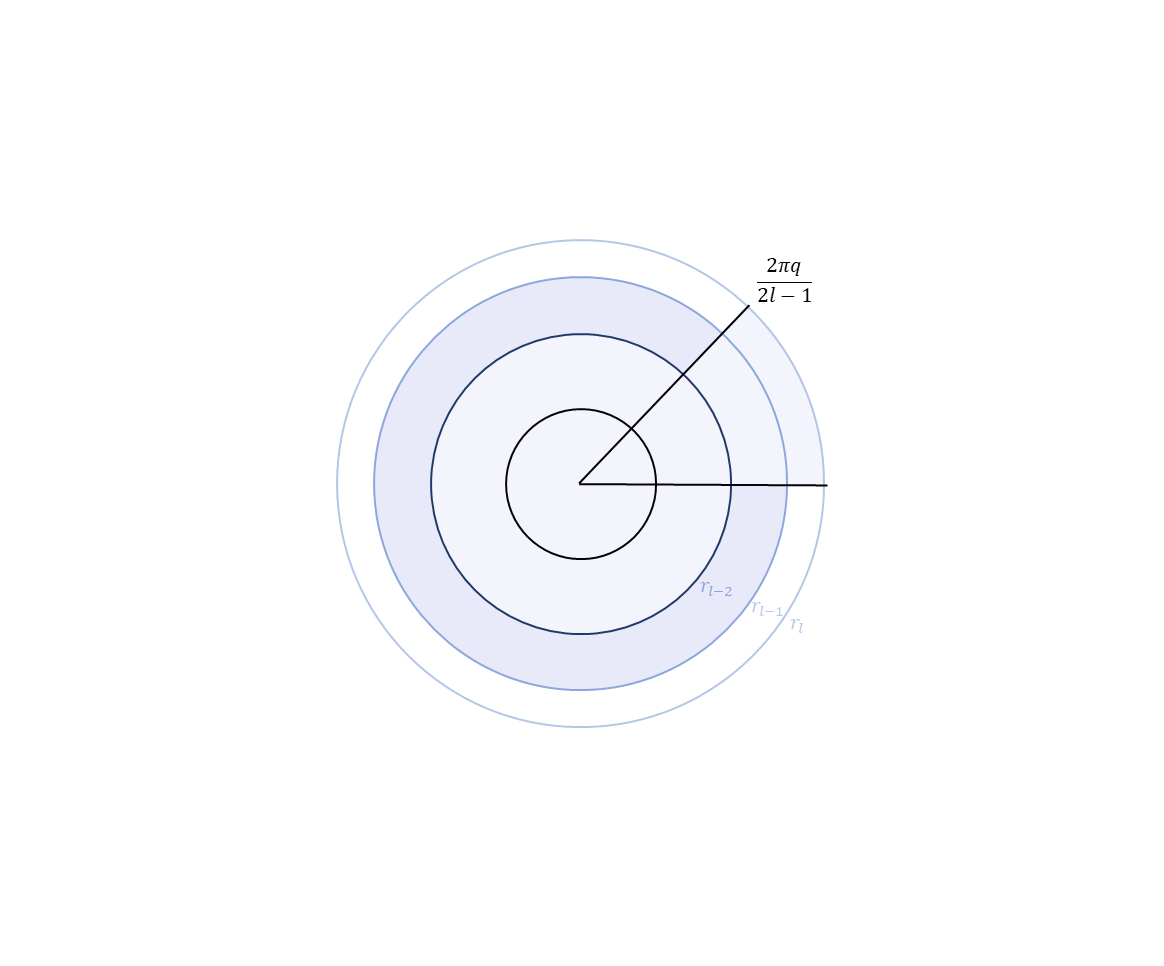}
\end{center}
\caption{$\lambda_p \prec \tilde{\lambda}_p$.} 
\label{fig: prec}
\end{figure}
\noindent {\bf (I, A)} Suppose that $|\lambda_p-\tilde{\lambda}_p|\ge\frac{s}{\sqrt{n-m+(l-1)^2}}$, that $\lambda_p \prec \tilde{\lambda}_p$, and that $\frac{s}{2(l-1)} < \frac{2\pi q}{2l-1}.$ We claim that \begin{align*}
    \lambda_p \prec r_{l-1} \exp \left[ i \left( \frac{2\pi q}{2l-1}-\frac{s}{2(l-1)}\right) \right].
\end{align*}
Indeed, since $\lambda_p \prec \tilde{\lambda}_p$, either
\begin{enumerate}[label= (\roman*)]
\item $ \begin{aligned}[t] r_{l-1} \leq |\lambda_p| < r_{l} &  \text{ and } \arg \lambda_p < \arg \tilde{\lambda}_p = \frac{2\pi q}{2l-1} 
\end{aligned} $
 or 
\item $ \begin{aligned}[t]  |\lambda_p| < |\tilde{\lambda}_p| = r_{l-1} .
\end{aligned}$
\end{enumerate}
If $|\lambda_p| < r_{l-1}$ holds, then the claim holds trivially. Otherwise, the estimate in (\ref{E: dist-in-circle}) implies
\begin{align*}
    |\lambda_p - \tilde{\lambda}_p| & \leq r_{l-1}a\left( \arg \lambda_p, \frac{2\pi q}{2l-1} \right) + | |\lambda_p|- r_{l-1}| \\
    & \leq \frac{l-1}{\sqrt{n-m+(l-1)^2}}a \left( \arg \lambda_p, \frac{2\pi q}{2l-1} \right) + \frac{s}{2\sqrt{n-m+(l-1)^2}}.
\end{align*}
Therefore when condition (i) holds and $|\lambda_p - \tilde{\lambda}_p| \geq \frac{s}{\sqrt{n-m+(l-1)^2}}$, then
\begin{align} \label{E: large mod}
a \left( \arg \lambda_p, \frac{2\pi q}{2l-1} \right) \geq \frac{s}{2(l-1)}
\end{align}
and so $\arg \lambda_p < \frac{2\pi q}{2l-1} - \frac{s}{2(l-1)}.$ In this case as well, then,
\begin{align*}
    \lambda_p \prec r_{l-1} \exp \left[ i \left( \frac{2\pi q}{2l-1}-\frac{s}{2(l-1)}\right) \right].
\end{align*}
It follows from the claim that 
\begin{align*}
    \mathcal{N}_{l-1, \frac{2\pi q}{2l-1}-\frac{s}{2(l-1)} } \geq p.
\end{align*}
Now, the computation of $m \mu_{\alpha} \left( A_{i,\theta}\right)$ in the proof of Corollary \ref{T:segment-means} gives that
\begin{align*}
     m \mu_{\alpha} \left( A_{l-1, \frac{2\pi q}{2l-1}-\frac{s}{2(l-1)} }\right)  &  = (l-1)^2 + q - \frac{s(2l-1)}{4\pi(l-1)}  = p - \frac{s(2l-1)}{4\pi(l-1)} \leq p - \frac{s}{2\pi}.
\end{align*}
Then Theorem \ref{L:bernstein} implies that
\begin{align*}
    \mathbb{P}  & \left[ \mathcal{N}_{l-1, \frac{2\pi q}{2l-1}-\frac{s}{2(l-1)} } \geq p \right] \\
    & \quad \leq \mathbb{P} \left[ \mathcal{N}_{l-1, \frac{2\pi q}{2l-1}-\frac{s}{2(l-1)} } - m\mu_{\alpha} \left( A_{l-1, \frac{2\pi q}{2l-1}-\frac{s}{2(l-1)} }\right) \geq \frac{s}{2\pi} \right] \\
    & \quad \leq 2\exp\left[ - \min \left\{ \frac{s^2}{C_\alpha (l-1)\sqrt{\log(l-1)}}, \frac{s}{8\pi} \right\} \right]\\&=2\exp\left\{-\frac{s^2}{C_\alpha l\sqrt{\log(l)}}\right\},
\end{align*}
since $s\le2\pi(l-1)$.

\noindent{\bf (I, B)} Suppose that $|\lambda_p-\tilde{\lambda}_p|\ge\frac{s}{\sqrt{n-m+(l-1)^2}}$, $\lambda_p \prec \tilde{\lambda}_p$ and $\frac{2\pi q}{2l-1} \leq \frac{s}{2(l-1)}.$ We claim that
\begin{align*}
    \lambda_p \prec \frac{l-2}{\sqrt{n-m+(l-2)^2}}\exp\left[ i \left( 2\pi + \frac{2\pi q}{2l-1} - \frac{s}{2(l-1)} \right) \right].
\end{align*}
The estimate (\ref{E: dist-in-circle}) implies condition (ii) above must hold; that is, $|\lambda_p|< r_{l-1}$. If $|\lambda_p| \geq r_{l-1} - \frac{s}{2\sqrt{n-m+(l-1)^2}}$, then the estimate (\ref{E: dist-in-circle}) again implies that
\begin{align*} 
\pi \geq a \left( \arg \lambda_p, \frac{2\pi q}{2l-1} \right) \geq \frac{s}{2(l-1)}.
\end{align*} 
In particular, when $r_{l-2} \leq |\lambda_p| < r_{l-1}$, $\arg \lambda_p < 2\pi + \frac{2\pi q}{2l-1} - \frac{s}{2(l-1)}$.
If $|\lambda_p|<r_{l-1}- \frac{s}{2\sqrt{n-m+(l-1)^2}} < r_{l-2}$, then the estimate (\ref{E: dist-in-circle}) implies 
\begin{align*}
    |\lambda_p - \tilde{\lambda}_p| & \leq r_{l-1}a\left( \arg \lambda_p, \frac{2\pi q}{2l-1} \right) + | |\lambda_p|- r_{l-1}| \\
    & \leq \frac{l-1}{\sqrt{n-m+(l-1)^2}}a \left( \arg \lambda_p, \frac{2\pi q}{2l-1} \right) + \frac{4(l-1)- s}{2\sqrt{n-m+(l-1)^2}}.
\end{align*}
Then 
\begin{align*}
\frac{s}{2(l-1)} \leq \frac{a \left( \arg \lambda_p, \frac{2\pi q}{2l-1} \right) +2}{3} \leq \pi.
\end{align*}
Either way, $\frac{s}{2(l-1)} \leq \pi$,
\begin{align*}
    \lambda_p \prec \frac{l-2}{\sqrt{n-m+(l-2)^2}}\exp\left[ i \left( 2\pi + \frac{2\pi q}{2l-1} - \frac{s}{2(l-1)} \right) \right],
\end{align*}
and 
\begin{align*}
    \mathcal{N}_{l-2, 2\pi + \frac{2\pi q}{2l-1} -\frac{s}{2(l-1)}}  \geq p.
\end{align*}
Now the computation in the proof of Corollary \ref{T:segment-means} yields
\begin{align*}
   m \mu_{\alpha}\left( A_{l-2, 2\pi + \frac{2\pi q}{2l-1} -\frac{s}{2(l-1)}} \right)
   & = (l-1)^2 + \frac{q(2l-3)}{2l-1} - \frac{s(2l-3)}{4\pi (l-1)} \leq p - \frac{s}{4\pi}
\end{align*}
for $l \geq  2.$ Therefore in this range of $s$, Theorem \ref{L:bernstein} implies that 
\begin{align*} \begin{split}
    \mathbb{P} & \left[ \mathcal{N}_{l-2, 2\pi + \frac{2\pi q}{2l-1} -\frac{s}{2(l-1)}}  \geq p \right] \\
    & \quad \leq \mathbb{P} \left[ \mathcal{N}_{l-2, 2\pi + \frac{2\pi q}{2l-1} -\frac{s}{2(l-1)}}  - m\mu_{\alpha}\left( A_{l-2, 2\pi + \frac{2\pi q}{2l-1} -\frac{s}{2(l-1)}} \right) \geq \frac{s}{4\pi} \right] \\
    & \quad  \leq 2\exp\left[ - \min \left\{ \frac{s^2}{C_\alpha (l-2)\sqrt{\log(l-2)}}, \frac{s}{16\pi} \right\} \right]\\
    & \quad=2\exp\left\{ - \frac{s^2}{C_\alpha l\sqrt{\log(l)}}\right\}.
\end{split} \end{align*}
The estimates above cover the entire range of $s$ when $\lambda_p \prec \tilde{\lambda}_p$ and so
\begin{align*} \begin{split}
    \mathbb{P} & \left[ |\lambda_p - \tilde{\lambda}_p| \geq \frac{s}{\sqrt{n-m + (l-1)^2}}, \lambda_p \prec \tilde{\lambda}_p \right] \le 2\exp\left\{ - \frac{s^2}{C_\alpha l\sqrt{\log(l)}}\right\}
\end{split} \end{align*}
for all $s>0$. \\
\begin{figure}
\begin{center}
\includegraphics[width=0.8\textwidth]{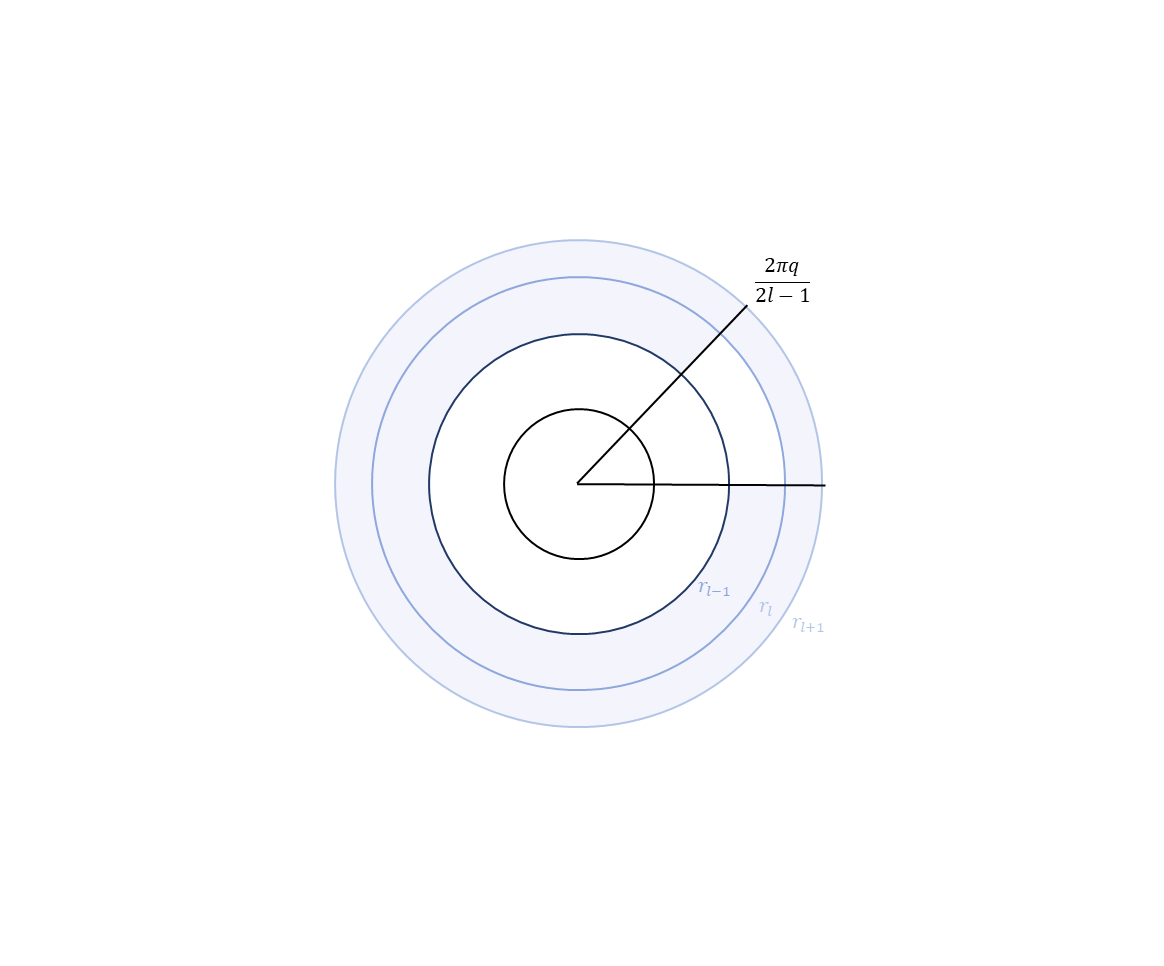}
\end{center}
\caption{$\lambda_p \succ \tilde{\lambda}_p$.} 
\label{fig: succ}
\end{figure}
\noindent {\bf (II, A)} Suppose that $|\lambda_p-\tilde{\lambda}_p|\ge\frac{s}{\sqrt{n-m+(l-1)^2}}$, that $\lambda_p \succ \tilde{\lambda}_p$, and that $\frac{s}{2(l-1)} < 2\pi - \frac{2\pi q}{2l-1}.$ We claim that 
\begin{align*}
    \lambda_p \succ \frac{l-1}{\sqrt{n-m+(l-1)^2}} \exp \left[ i \left( \frac{2\pi q}{2l-1} + \frac{s}{2(l-1)} \right) \right].
\end{align*}
Indeed, since $\lambda_p \succ \tilde{\lambda}_p$, either 
\begin{enumerate}[label=(\roman*)]
\item $\begin{aligned}[t]
    r_{l-1} \leq |\lambda_p| < r_l \text{ and } \arg \lambda_p > \arg \tilde{\lambda}_p 
\end{aligned}$ 
or
\item $\begin{aligned}[t]
    |\lambda_p| \geq r_l = \frac{l}{\sqrt{n-m+l}}.
\end{aligned}$
\end{enumerate}
If $|\lambda_p| \geq r_l$ holds, then the claim is trivially true. Suppose that condition (i) holds.   Then for $s \geq 2$, $|\lambda_p| < r_l < r_{l-1} + \frac{s}{2\sqrt{n-m+(l-1)^2}}$.  As above, combining this observation with the fact that $|\lambda_p - \tilde{\lambda}_p| \geq \frac{s}{\sqrt{n-m+(l-1)^2}}$ and the estimate (\ref{E: dist-in-circle}) implies
$a \left( \arg \lambda_p, \frac{2\pi q}{2l-1} \right) \geq \frac{s}{2(l-1)}.$ It follows that if condition (i) holds, then $\arg \lambda_p > \frac{2\pi q}{2l-1} + \frac{s}{2(l-1)}$, and so
\begin{align*}
    \lambda_p \succ \frac{l-1}{\sqrt{n-m+(l-1)^2}} \exp \left[ i \left( \frac{2\pi q}{2l-1} + \frac{s}{2(l-1)} \right) \right].
\end{align*}
It follows from the claim that \begin{align*}
    \mathcal{N}_{l-1, \frac{2\pi q}{2l-1}+\frac{s}{2(l-1)}} < p.
\end{align*}
By the proof of Corollary (\ref{T:segment-means}),
\begin{align*}
    m \mu_{\alpha}\left( A_{l-1, \frac{2\pi q}{2l-1}+\frac{s}{2(l-1)}} \right)
    & = p + \frac{s}{2\pi} \frac{2l-1}{2l-2} \geq p + \frac{s}{2\pi},
\end{align*}
and so Theorem \ref{L:bernstein} implies that
\begin{align*}
    \mathbb{P} & \left[ \mathcal{N}_{l-1, \frac{2\pi q}{2l-1}+\frac{s}{2(l-1)}}  <p \right] \\
    & \quad  \leq \mathbb{P} \left[ m\mu_{\alpha}\left( A_{l-1, \frac{2\pi q}{2l-1}+\frac{s}{2(l-1)}} \right) - \mathcal{N}_{l-1, \frac{2\pi q}{2l-1}+\frac{s}{2(l-1)}}   > \frac{s}{2\pi} \right] \\
    & \quad  \leq 2\exp\left[ - \min \left\{ \frac{s^2}{C_\alpha(l-1)\sqrt{\log(l-1)}}, \frac{s}{8\pi} \right\} \right]\\&\quad=2\exp\left[ - \frac{s^2}{C_\alpha(l-1)\sqrt{\log(l-1)}}\right],
\end{align*}
since $s\le 2\pi(l-1)$.

\noindent {\bf (II, B)} Suppose that $|\lambda_p-\tilde{\lambda}_p|\ge\frac{s}{\sqrt{n-m+(l-1)^2}}$, that $\lambda_p \succ \tilde{\lambda}_p$ and  that $2\pi - \frac{2\pi q}{2l-1} \leq \frac{s}{2(l-1)} \leq \pi.$ We claim that
\begin{align*}
    \lambda_p \succ \frac{l}{\sqrt{n-m+l^2}} \exp \left[ i \left( \frac{2\pi q}{2l-1}+ \frac{s}{2(l-1)} - 2\pi \right) \right].
\end{align*}
By the estimate (\ref{E: dist-in-circle}) again, it must be the case that condition (ii) holds and $|\lambda_p| \geq r_l$. If $s \geq 2$ and $|\lambda_p| \geq r_{l-1} + \frac{s}{2\sqrt{n-m+(l-1)^2}} > r_{l+1}$, then the claim holds trivially. If $|\lambda_p|<r_{l-1} + \frac{s}{2\sqrt{n-m+(l-1)^2}}$, then the estimate (\ref{E: dist-in-circle}) yields $\arg \lambda_p > \frac{2\pi q}{2l-1} + \frac{s}{2(l-1)} - 2\pi.$ In particular, if $r_l \leq |\lambda_p| < r_{l+1}$, then $\arg \lambda_p > \frac{2\pi q}{2l-1} + \frac{s}{2(l-1)} - 2\pi$ and so
\begin{align*}
    \lambda_p \succ \frac{l}{\sqrt{n-m+l^2}} \exp \left[ i \left( \frac{2\pi q}{2l-1}+ \frac{s}{2(l-1)} - 2\pi \right) \right].
\end{align*}
It follows from the claim that
\begin{align*}
    \mathcal{N}_{l,  \frac{2\pi q}{2l-1}+ \frac{s}{2(l-1)} - 2\pi} < p.
\end{align*}
Since
\begin{align*}
    m \mu_{\alpha}\left( A_{l,  \frac{2\pi q}{2l-1}+ \frac{s-2}{l-1} - 2\pi}\right) & = l^2 + \frac{ \frac{2\pi q}{2l-1} + \frac{s}{2(l-1)} - 2\pi}{2\pi} \left(2l + 1 \right) \geq p + \frac{s}{2\pi},
\end{align*}
Theorem \ref{L:bernstein} implies that in this regime,
\begin{align*}
    \mathbb{P} & \left[ \mathcal{N}_{l,  \frac{2\pi q}{2l-1}+ \frac{s}{2(l-1)} - 2\pi} < p \right] \\
    & \quad  \leq \mathbb{P} \left[ m\mu_{\alpha} \left( A_{l,  \frac{2\pi q}{2l-1}+ \frac{s}{2(l-1)} - 2\pi}\right) - \mathcal{N}_{l,  \frac{2\pi q}{2l-1}+ \frac{s}{2(l-1)} - 2\pi} > \frac{s}{2\pi} \right] \\
    &  \quad \leq 2 \exp\left[ - \min \left\{ \frac{s^2}{C_\alpha l\sqrt{\log(l)}}, \frac{s}{8\pi} \right\} \right]\\&\quad=2 \exp\left[ - \frac{s^2}{C_\alpha l\sqrt{\log(l)}}\right].
\end{align*}
Combining cases (I, A), (I,B), (II, A), and (II, B) thus yields 
\begin{align*}
    \mathbb{P}\left[ |\lambda_p - \tilde{\lambda}_p| \geq \tfrac{s}{\sqrt{n-m+(l-1)^2}} \right] &  \leq  2\exp\left[ - \frac{s^2}{C_\alpha l\sqrt{\log(l)}}\right].
\end{align*} 
when $s \leq 2\pi(l-1)$. This proves the first part of the Theorem (small $s$). 

Finally, we consider cases for the larger  values of $s$ based on which part of Theorem \ref{L:bernstein} applies. \\ 

\noindent {\bf (C)} Let $\epsilon_m=\sqrt{\frac{2\log (m+1)}{m}}$ and suppose that $|\lambda_p-\tilde{\lambda}_p|\ge\frac{s}{\sqrt{n-m+(l-1)^2}}$, that $\lambda_p \succ \tilde{\lambda}_p$, and that
\begin{align*}
2\pi(l-1) \leq s\leq \left[\sqrt{m} \left( 1 - \frac{ \epsilon_m}{1 - \alpha\left( 1- \epsilon_m \right)} \right)^{\frac{1}{2}}+ l-1\right].
\end{align*}
(That is, $s-l + 1 \leq \sqrt{m} \left( 1 - \frac{ \epsilon_m}{1 - \alpha\left( 1- \epsilon_m \right) } \right)^{\frac{1}{2}}$.) By the triangle inequality,
\begin{align*}
|\lambda_p| \geq \frac{s}{\sqrt{n-m+(l-1)^2}} - |\tilde{\lambda}_p| = \frac{s-l+1}{\sqrt{n-m+(l-1)^2}}.
\end{align*}
Now, $\frac{s}{2(l-1)} > \pi$ implies $(l-1)^2 \leq (s-l+1)^2$. It follows that
$|\lambda_p| \geq \frac{s-l+1}{\sqrt{n-m+(s-l+1)^2}}$, and so
\begin{align*}
\mathcal{N}_{\lfloor s-l+1\rfloor, 2\pi} < p.
\end{align*} 
Since $p = (l-1)^2 +q$, $1 \leq q \leq 2l-1$, and $l < \frac{s}{2\pi} + 1$,
\begin{align*}
m\mu_{\alpha} \left( A_{\lfloor s-l+1\rfloor, 2\pi} \right)  & = (\lfloor
                                                s-l+1\rfloor)^2 +
                                                2\lfloor s-l+1\rfloor + 1 \\
& \ge s^2 - 2sl+l^2+2s-2l+1\\&=s^2-2s(l-1)+p-q\ge cs^2+p,
\end{align*}
since $s\ge 2\pi$.
It follows from Theorem  \ref{L:bernstein},
\begin{align*}
\mathbb{P}  \left[ \mathcal{N}_{\lfloor s-l+1\rfloor, 2\pi} < p \right] &  \leq \mathbb{P} \left[ m\mu_{\alpha} \left( A_{\lfloor s-l+1\rfloor, 2\pi} \right)  - \mathcal{N}_{\lfloor s-l+1\rfloor, 2\pi}   > cs^2\right] \\
&  \leq 2 \exp \left[ - \min \left\{ \frac{c^2s^4}{C_\alpha
  (\lfloor s-l+1\rfloor)\sqrt{\log(\lfloor s-l+1\rfloor)}}, \frac{cs^2}{2} \right\} \right]\\&\le 2
                                                                \exp
                                                                \left[
                                                                -c_\alpha
                                                                s^2\right],
\end{align*}
since $s\ge 2\pi(l-1)$. \\
\noindent {\bf (D)} Suppose that $|\lambda_p-\tilde{\lambda}_p|\ge\frac{s}{\sqrt{n-m+(l-1)^2}}$, that $\lambda_p \succ \tilde{\lambda}_p$, and that
\begin{align*}
\frac{\sqrt{m} \left( 1 - \frac{ \epsilon_m}{1 - \alpha\left( 1 - \epsilon_m \right)} \right)^{\frac{1}{2}}+ l-1}{2(l-1)} < \frac{s}{2(l-1)};
\end{align*}
that is,
\begin{align*}
\sqrt{m} \left( 1 - \frac{ \epsilon_m}{1 - \alpha\left( 1 - \epsilon_m \right)} \right)^{\frac{1}{2}} < s-l+1.
\end{align*}
As in the previous case, 
\begin{align*}
\mathcal{N} \left( A_{\lfloor s-l+1\rfloor, 2\pi} \right) < p.
\end{align*} 
If $\frac{s-l+1}{\sqrt{n-m+(s-l+1)^2}} \geq 2$, then $\mathbb{P} \left[ \mathcal{N} \left( A_{\lfloor s-l+1\rfloor, 2\pi} \right) < p \right] =0$. Otherwise, by the second estimate in Lemma \ref{L: expectedval},
\begin{align*}
\mathbb{P} & \left[ \mathcal{N} \left( A_{\lfloor s-l+1\rfloor, 2\pi} \right) < p \right] \\
&\leq \mathbb{P} \left[ \mathbb{E} \mathcal{N} \left( A_{\lfloor s-l+1\rfloor, 2\pi} \right) - \mathcal{N} \left( A_{\lfloor s-l+1\rfloor, 2\pi} \right) > m \mu_{\alpha} \left( A_{\lfloor s-l+1\rfloor,2\pi} \right) - \tfrac{6\sqrt{2m \log(m+1)}}{1-\alpha} - p \right] \\
&  \leq \mathbb{P} \left[ \mathbb{E} \mathcal{N} \left( A_{\lfloor s-l+1\rfloor, 2\pi} \right) - \mathcal{N} \left( A_{\lfloor s-l+1\rfloor, 2\pi} \right) >cs^2 -  \tfrac{6\sqrt{2m \log(m+1)}}{1-\alpha} \right].
\end{align*}
In this range, $s^2 \geq m \left( 1 - \frac{\epsilon_m}{1 - \alpha(1-\epsilon_m)} \right)$, so the lower bound can be replaced, for large enough $m$, by $cs^2$ by slightly reducing the value of $c$.  Theorem \ref{L:bernstein} applied with $t=cs^2\ge\frac{12\sqrt{2m\log(m)}}{1-\alpha}$ (again for $m$ large enough) then yields
\begin{align*}
\mathbb{P}  \left[ \mathcal{N} \left( A_{\lfloor s-l+1\rfloor, 2\pi} \right) < p \right] &\leq 2 \exp \left[ - \min \left\{ \frac{c^2s^4}{C_\alpha(\lfloor s-l+1\rfloor)\sqrt{\log(\lfloor s-l+1\rfloor)}}, \frac{cs^2}{2} \right\} \right]\\&\le 2 \exp \left[ -c_\alpha
                                                                s^2\right].
\end{align*}
Cases (C) and (D) thus yield
\begin{align*}
    \mathbb{P}\left[ |\lambda_p - \tilde{\lambda}_p| \geq \tfrac{s}{\sqrt{n-m+(l-1)^2}} \right]  & \leq  2 \exp \left[  -c_\alpha
                                                                s^2\right]
\end{align*}
for $s\ge 2\pi(l-1)$.  
Finally, the empirical spectral measure is supported on the disc of radius 1. It follows that if $s \geq 2\sqrt{n-m+(l-1)^2}$, then 
\begin{align*}
    \mathbb{P}\left[ |\lambda_p - \tilde{\lambda}_p| \geq \tfrac{s}{\sqrt{n-m+(l-1)^2}} \right] = 0.
    \end{align*}
This completes the proof. 
\end{proof}

\begin{proof}[Proof of Corollary \ref{T:location-variance}]
Let $\epsilon_m = \sqrt{\frac{2 \log (m+1)}{m}}$ and $p$ be such that 
\begin{align*}
2\le l=\ceil{\sqrt{p}} \leq \sqrt{m}\left( 1 - \frac{ \epsilon_m}{1-\alpha ( 1- \epsilon_m)} \right)^{\frac{1}{2}}. 
\end{align*}
Then by Fubini's theorem and Theorem \ref{L: individual},
\begin{align*}
\var(\lambda_p) & \leq \mathbb{E} \left| \lambda_p - \tilde{\lambda}_p \right|^2 \\
& = \int_0^{\infty} 2t \mathbb{P} \left[ \left| \lambda_p - \tilde{\lambda}_p \right| > t \right] dt \\
& = \frac{2}{n-m+(l-1)^2} \int_0^{\infty} s \mathbb{P} \left[ \left| \lambda_p - \tilde{\lambda}_p \right| > \frac{s}{\sqrt{n-m+(l-1)^2}} \right] ds \\
& \leq \tfrac{2}{n-m+(l-1)^2} \bigg[ \int_0^{2\pi (l-1)} 2s e^{- \frac{s^2}{C_\alpha l\sqrt{\log(l)} }} ds + \int_{2\pi (l-1)}^{2\sqrt{n-m+(l-1)^2}} 2se^{- c_\alpha s^2} ds \bigg] \\
\\
& \leq \tfrac{2}{n-m+(l-1)^2} \bigg[ \int_0^{\infty} 2s e^{- \frac{s^2}{C_\alpha l\sqrt{\log(l)} }} ds + \int_{2\pi (l-1)}^{\infty} 2se^{- c_\alpha s^2}ds \bigg]\\
& \leq \tfrac{C_\alpha}{n-m + (l-1)^2} \left[ l\sqrt{\log(l+1)}\right]\\&\le C_\alpha\frac{l\sqrt{\log(l+1)}}{n},
\end{align*}
since $(l-1)^2\le m$.
\end{proof}

\bibliographystyle{plain}
\bibliography{eigenvalue-rigidity}

\begin{thebibliography}{1}


\bibitem{Gus}
Jonas Gustavsson.
\newblock Gaussian fluctuations of eigenvalues in the GUE.
\newblock {\em Ann. Inst. H. Poincar\'e Probab. Statist. }, 41(2):151-178, 2005. 

\bibitem{HKPV}
J. Ben Hough, Manjunath Krishnapur, Yuval Peres, and B\'alint Vir\'ag. 
\newblock Determinantal processes and independence.
\newblock {\em Probab. Surv.}, 3:206-229, 2006.

\bibitem{MM15}
Elizabeth S. Meckes and Mark W. Meckes.
\newblock A rate of convergence for the circular law for the complex Ginibre ensemble.
\newblock {\em Annales de la Facult\'e des Sciences de Toulouse}, Ser. 6 24(1):93-117, 2015. 

\bibitem{MS19}
Elizabeth Meckes and Kathryn Stewart.
\newblock On the eigenvalues of truncations of random unitary matrices.
\newblock arXiv:1811.08340, 2019.

\bibitem{PR}
D\'enes Petz and J\'ulia R\'effy.
\newblock Large deviation for the empirical eigenvalue density of truncated Haar unitary matrices.
\newblock {\em Probab. Theory and Related Fields}, 133(2):175-189, 2005. 

\bibitem{ZS}
Karol \.{Z}yckowski and Hans-J\"urgen Sommers.
\newblock Truncations of random unitary matrices.
\newblock{\em J. Phys. A}, 33(10):2045-2057, 2000.
\end{thebibliography}

\end{document}